\newcommand{\eps}{\varepsilon}
\numberwithin{equation}{section}
\newtheorem{theo}{Theorem}
\newtheorem{lemma}[theo]{Lemma}
\newtheorem{prop}[theo]{Proposition}
\newtheorem{cor}[theo]{Corollary}
\newtheorem{defi}[theo]{Definition}
\theoremstyle{definition}
\numberwithin{theo}{section}
\newtheorem{rems}[theo]{Remarks}
\newtheorem{rem}[theo]{Remark}
\newcommand{\Jnup}{J^{\uparrow n}}
\newcommand{\Jndown}{J^{\downarrow n}}
\title[Convergence of semigroups and generalised elliptic forms]{Convergence of operator semigroups associated with generalised elliptic forms}
\keywords{Elliptic sesquilinear forms, Semigroup convergence, Gibbs semigroups, Minimax principle, Schatten classes}
\subjclass[2010]{Primary: 47D06; Secondary: 47B10}
\author{Delio Mugnolo}
\address{Delio Mugnolo\\Institut f\"ur Analysis\\Universit\"at Ulm\\89069 Ulm\\Germany}
\email{delio.mugnolo@uni-ulm.de}
\author{Robin Nittka}
\address{Robin Nittka\\Institut f\"ur Angewandte Analysis\\Universit\"at Ulm\\89069 Ulm\\Germany}
\email{robin.nittka@uni-ulm.de}
\DeclareMathOperator{\sgn}{sign}
\DeclareMathOperator{\Real}{Re}
\DeclareMathOperator{\Ima}{Im}
\DeclareMathOperator{\Kern}{ker}
\newcommand{\scalar}[3][]{\left(#2\mid#3\right)_{#1}}
\begin{document}

\begin{abstract}
In a recent article, Arendt and ter Elst have shown that every sectorial form
is in a natural way associated with the generator of an analytic strongly
continuous semigroup, even if the form fails to be closable. As an
intermediate step they have introduced so-called \emph{$j$-elliptic} forms,
which generalises the concept of elliptic forms in the sense of Lions. We push
their analysis forward in that we discuss some perturbation and convergence
results for semigroups associated with $j$-elliptic forms. In particular, we
study convergence with respect to the trace norm or other Schatten norms. We
apply our results to Laplace operators and Dirichlet-to-Neumann-type operators.
\end{abstract}

\maketitle

\section{Introduction}

The use of sesquilinear form in semigroup theory dates back to the works of
Tosio Kato and Jacques-Louis Lions. A generalisation of Kato's and Lions'
approach has been recently proposed by Wolfgang Arendt and Tom ter
Elst~\cite{AreEls09}. Their method permits to treat differential operators on
rough domains, strongly degenerate equations, Dirichlet-to-Neumann operators
and Stokes-type equations with ease, cf.~\cite{AreEls09,AreEls11}. In this
article we consider only what they call the \emph{complete case}, which
corresponds to Lions' forms, not their \emph{incomplete case}, which
corresponds to Kato's approach. These two notions are different descriptions of
the same ideas. In Section~2 we introduce $j$-elliptic forms and recall some
basic facts which we need. We also prove that $j$-ellipticity is preserved
under small perturbations and we also present a generalisation of the Courant's
minimax formula. 

The study of convergence of sequences of $C_0$-semigroups goes back to the
pioneering works on semigroup theory in the 1950s. In particular, convenient
convergence criteria for semigroups associated with closed forms can be found
in Kato's book~\cite{Kat95}. In Section~3 we establish criteria for
$j$-elliptic forms that imply strong convergence of the associated semigroups.
Such convergence results will in turn allow us to deduce convergence in
stronger norms, for example Schatten norms. Our first result in this section is
a Mosco-like convergence criterion for symmetric forms (Theorem~\ref{thm:strongconv}).

The Schatten classes $\mathcal L_p$ have been introduced
in~\cite{SchNeu46} by Robert Schatten and John von Neumann. For $p=1$,
one obtains the well-studied trace class. It became clear soon after the publication of~\cite{SchNeu46}
that trace class operators play an important r\^ole in spectral theory,
perturbation theory and mathematical physics. An interesting account on the history of the
development of the Schatten theory can be found in the introduction
of~\cite{Sim05}. Criteria for convergence of a sequence of operators with
respect to Schatten norms have been investigated for a long time, see
e.g.~\cite{Zag80} and references therein. We translate a result
due to Valentin A.\ Zagrebnov into the framework of $j$-elliptic forms,
which gives a sufficient condition for convergence in Schatten norm. We then
combine this with an interpolation result for Schatten class operators in order to
prove convergence of semigroups as Schatten class operators into spaces of higher
regularity, e.g.\ from $L^2(\Omega)$ into $H^k(\Omega)$ for any $k \in \mathbb{N}$.

Summarizing our main results, on $L^2(X)$, $X$ a finite measure space, the following holds:
\begin{quotation}
Strong convergence implies trace norm (hence uniform) convergence of a family of self-adjoint contraction semigroups, provided that their generators all dominate the generator of an ultra-contrac\-tive semigroup; and this even as operators from $L^2(X)$ into a space of more regular functions.
\end{quotation}
This is made precise in Corollary~\ref{cor:summarize} and the subsequent remark.

In Section~4 we present several applications for our theorems and ideas. More
precisely, we study Schatten norm convergence of semigroups generated by
Laplacians with varying Robin boundary conditions as well as trace norm
convergence of semigroups generated by Dirichlet-to-Neumann-like operators with
varying coefficients. We also compare the spectra of several self-adjoint
operators based on our general version of the minimax formula.

\section{Generalised elliptic forms}\label{j-f:sec}

In this section we study \emph{$j$-elliptic forms}. We start with some basic facts.
For a broader introduction and proofs of the fundamental theorems we refer to~\cite{AreEls09}.

\begin{defi}\label{def:complete}
Let $V$ and $H$ be Hilbert spaces and $j\colon V \to H$ a bounded linear map
with dense range. A sesquilinear form $a\colon V \times V \to \mathbb{C}$ is
called a $j$-elliptic form on $H$ with form domain $V$ if it is continuous as a
function from $V \times V$ to $\mathbb{C}$ and there exist $\omega \in \mathbb{R}$ and $\mu > 0$
such that
\begin{equation}\label{eq:elliptic}
	\Real a(u,u) - \omega\|j(u)\|^2_H \ge \mu \|u\|_V^2\qquad \hbox{ for all }u \in V.
\end{equation}
The unique, densely defined, m-sectorial operator $A$ on $H$ given by
\begin{align*}
	D(A) & \coloneqq \{ x \in H : \exists u \in V, \; j(u) = x,\; \exists f\in H\hbox{ s.t. } \; a(u,v) = \scalar[H]{f}{j(v)} \; \forall v \in V \} \\
	Ax & \coloneqq f
\end{align*}
is called the operator associated with $(a,j)$. We say that $(a,j)$ is associated to $A$
and also that $(a,j)$ is associated with the analytic $C_0$-semigroup $(e^{-tA})_{t \ge 0}$ on $H$.

We say that $a$ is symmetric if $a(u,v) = \overline{a(v,u)}$ for all $u,v\in V$.
In this case the associated operator $A$ and the semigroup $(e^{-tA})_{t \ge 0}$ are self-adjoint.

We say that $a$ is positive if $a(u,u) \ge 0$ for all $u\in V$. By the polarisation identity every
positive (or, more generally, every real-valued) sesquilinear form is symmetric.
\end{defi}

If $j$ is injective, we can regard $V$ as a subspace of $H$, regarding $j$ as the embedding
of $V$ into $H$. In this case the notion of a $j$-elliptic form $a$ introduced in Definition~\ref{def:complete}
coincides with Lions' definition of elliptic forms.
Thus we refer to this situation as \emph{classical}, i.e., we say that $(a,j)$ is a \emph{classical form}
if $j$ is injective.

\begin{rem}\label{jclass}
Let $a$ be a $j$-elliptic form. Then
\begin{equation}\label{eq:Va}
	V(a) \coloneqq \{ u \in V : a(u,v) = 0 \; \forall v \in \Kern j \}
\end{equation}
is a closed subspace of $V$, $j_{|V(a)}$ is injective and $V = V(a) \oplus
\Kern j$. In particular, $j(V(a)) = j(V)$ is a dense subspace of $H$ and $j_{|V(a)}$ is
injective. The classical form $(a_{|V(a)\times V(a)},j_{|V(a)})$ is associated
to the same operator as $(a,j)$. This relation allows us to carry over many
results about classical forms to $j$-elliptic forms, which is the basis of this section.
\end{rem}

\begin{rem}\label{rem:remelliptic}
Remark~\ref{jclass} suggests that $a$ is associated with an m-sectorial operator if merely
\[
	\Real a(u,u) - \omega\|j(u)\|^2_H \ge \mu \|u\|_V^2\qquad \hbox{ for all }u \in V(a).
\]
This is indeed true provided that we require $V = V(a) + \Kern j$ in addition, cf.~\cite[Cor.~2.2]{AreEls09}.
\end{rem}

We want to extend several classical results to $j$-elliptic forms. We
begin with a generation result which is a translation of a
celebrated by Michel Crouzeix on cosine function generators.

\begin{prop}\label{lemma:crouz}
Let $a$ be a $j$-elliptic form and denote by $A$ the associated operator.
Assume that there exists $M \ge 0$ such that
\begin{equation}
\label{crouzj}
|\Ima a(u,u)|\le M\|u\|_V \|j(u)\|_H\qquad \hbox{for all }u\in V(a).
\end{equation}
Then $-A$ generates a cosine
operator function and hence a semigroup with analyticity angle of
$\frac{\pi}{2}$.
\end{prop}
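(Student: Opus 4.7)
The plan is to reduce the statement to the classical version of Crouzeix's theorem by passing to the subspace $V(a)$, exploiting Remark~\ref{jclass}.

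First, I would consider the restricted pair $(a_{|V(a)\times V(a)}, j_{|V(a)})$. By Remark~\ref{jclass}, $V(a)$ is a closed subspace of $V$ (hence itself a Hilbert space), the map $j_{|V(a)}$ is injective with dense range $j(V(a)) = j(V) \subset H$, and this restricted form is associated to the same operator $A$. Identifying $V(a)$ with a dense subspace of $H$ via $j_{|V(a)}$ turns $(a_{|V(a)\times V(a)}, j_{|V(a)})$ into a classical elliptic form in the sense of Lions, the continuity of the embedding $V(a) \hookrightarrow H$ being inherited from the boundedness of $j$.

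Next, I would verify that Crouzeix's classical hypothesis holds. His theorem asserts that if a classical elliptic form $b$ on $W \hookrightarrow H$ satisfies $|\Ima b(u,u)| \le M \|u\|_W \|u\|_H$ for every $u \in W$, then the negative of the associated m-sectorial operator generates a cosine operator function. In our setting, with $W = V(a)$ identified inside $H$, the $H$-norm of $u \in V(a)$ is precisely $\|j(u)\|_H$, so the hypothesis \eqref{crouzj}, which is imposed exactly on $V(a)$, reads as Crouzeix's assumption verbatim. Therefore $-A$ generates a cosine operator function on $H$.

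The final assertion then follows from the well-known fact that on a Hilbert space every cosine function generator also generates a $C_0$-semigroup that extends to a bounded holomorphic function on the open right half-plane, i.e.\ with analyticity angle $\pi/2$.

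The only genuine subtlety is the bookkeeping between the ambient norms before and after the identification of $V(a)$ with its image in $H$ under $j_{|V(a)}$: one has to check that the classical Crouzeix bound written with $\|\cdot\|_W$ and $\|\cdot\|_H$ matches \eqref{crouzj} written with $\|\cdot\|_V$ and $\|j(\cdot)\|_H$, and that no contribution from $\Kern j$ is lost in the reduction. Once this is settled, the proof is a direct invocation of two classical results.
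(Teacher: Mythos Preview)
Your proposal is correct and follows essentially the same route as the paper. The only difference is presentational: the paper does not invoke a ready-made ``Crouzeix theorem for classical forms'' but instead carries out the one-line computation showing that for $x\in D(A)$ with $\|x\|_H=1$ and $u\in V(a)$ with $j(u)=x$ one has $|\Ima(Ax\mid x)_H|^2\le \tfrac{M^2}{\mu}\bigl(\Real(Ax\mid x)_H-\omega\bigr)$, hence the numerical range of $A$ lies in a parabola, and then cites Crouzeix's operator-theoretic result directly.
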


\begin{proof}
For all $x\in D(A)$ with $\|x\|_H=1$ there exists $u\in V(a)$ such that
$j(u) = x$ and
\begin{align*}
|\Ima (Ax|x)_H|^2 &=|\Ima a(u,u)|^2
\le M^2\|u\|_{V}^2 \|j(u)\|_H^2\\
&\le \frac{M^2}{\mu} \left(\Real a(u,u) - \omega \|j(u)\|^2_H \right) \|j(u)\|^2_H\\
&= \frac{M^2}{\mu} \left(\Real (Ax|x)_H - \omega  \right).
\end{align*}
Thus, the numerical range of $A$ is contained in a parabola and therefore
$-A$ generates a cosine operator function by Crouzeix' celebrated result~\cite{Cro04}.
Finally, every generator of a cosine function family generates a holomorphic
semigroup of angle $\frac{\pi}{2}$~\cite[Thm.~3.14.17]{AreBatHie01}.
\end{proof}

The following perturbation results are analogous to two classical
perturbation theorems for operators~\cite{DesSch84,DesSch88},
one relying on interpolation estimates, the other one on compactness.


\begin{prop}\label{lemma:perturb}
Let $a:V\times V\to \mathbb C$ be a $j$-elliptic form
and let $H'$ be a subspace of $H$ containing $j(V)$. Let $H'$ carry
its own norm $\|\cdot\|_{H'}$, for which it is a Banach space and is continuously embedded into $H$.
Assume that there exist $\alpha \in [0,1)$ and $M \ge 0$ such that
$$\|j(u)\|_{H'}\le M \|u\|_V^\alpha \|j(u)\|_H^{1-\alpha}\qquad \text{for all } u\in V.$$
Let $b:V\times V\to \mathbb C$ be a continuous sesquilinear form such that
$$\Real b(u,u)\ge -c\|u\|_V \|j(u)\|_{H'}\qquad\hbox{for all } u\in V$$
for some $c \ge 0$. Then $a+b:V\times V\to \mathbb C$ is $j$-elliptic.
\end{prop}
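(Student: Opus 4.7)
The plan is to verify the ellipticity estimate directly; continuity of $a+b$ is automatic from the assumed continuity of both $a$ and $b$, so the real work lies in controlling $\Real b(u,u)$ by a small multiple of $\|u\|_V^2$ plus a constant multiple of $\|j(u)\|_H^2$, which is exactly what the $\alpha<1$ interpolation estimate is designed to give via Young's inequality.

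First I would combine the two hypotheses on $b$ and on the interpolation bound to obtain, for every $u \in V$,
\[
\Real b(u,u) \ge -c\|u\|_V\|j(u)\|_{H'} \ge -cM\, \|u\|_V^{1+\alpha}\|j(u)\|_H^{1-\alpha}.
\]
Next I would apply Young's inequality with conjugate exponents $p = 2/(1+\alpha)$ and $q = 2/(1-\alpha)$, which lie in $(1,\infty)$ precisely because $\alpha \in [0,1)$. With a free parameter $\delta>0$ this yields
\[
\|u\|_V^{1+\alpha}\|j(u)\|_H^{1-\alpha} \le \frac{\delta}{p}\,\|u\|_V^2 + \frac{1}{q\delta^{q/p}}\,\|j(u)\|_H^2.
\]
Substituting into the $j$-elliptic estimate for $a$, I would then get
\[
\Real(a+b)(u,u) - \Bigl(\omega + \frac{cM}{q\delta^{q/p}}\Bigr)\|j(u)\|_H^2 \ge \Bigl(\mu - \frac{cM\delta}{p}\Bigr)\|u\|_V^2.
\]
Choosing $\delta$ small enough that $\mu - cM\delta/p > 0$ produces new constants $\omega'$ and $\mu'>0$ that witness the $j$-ellipticity of $a+b$.

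The main obstacle is really just the book-keeping around the exponents: one must recognise that $\alpha < 1$ is the precise condition that forces $p>1$, so that Young's inequality gives a small coefficient in front of $\|u\|_V^2$ (the term to be absorbed) at the cost of only a large but finite coefficient in front of $\|j(u)\|_H^2$ (which is harmless since $\omega$ in the definition of $j$-ellipticity is allowed to be any real number). If $\alpha=1$ were permitted, Young's inequality would degenerate and the absorption argument would break down; this explains why the endpoint is excluded in the hypothesis. No further structural difficulty arises, and sectoriality of $a+b$ is not needed separately because it is a consequence of $j$-ellipticity together with continuity.
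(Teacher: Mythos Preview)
Your proposal is correct and follows essentially the same route as the paper: both arguments combine the interpolation bound with Young's inequality using the exponent $p=2/(1+\alpha)$ (and its conjugate $q=2/(1-\alpha)$), then absorb the resulting $\|u\|_V^2$-term into the $j$-ellipticity estimate for $a$. The only differences are notational---you write out the explicit Young constant $\tfrac{1}{q\delta^{q/p}}$ where the paper uses a generic $c_{\eps,p}$---and your remarks about the role of $\alpha<1$ and the endpoint failure are accurate commentary not present in the paper.
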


\begin{proof}
We apply Young's inequality $\alpha \beta \le \eps \alpha^p + c_{\eps,p}
\beta^{p/(p-1)}$, which is valid for every $p \in (1,\infty)$, every
$\alpha,\beta\ge 0$, and every $\eps > 0$ with some constant $c_{\eps,p} \ge 0$.
For $p \coloneqq \frac{2}{1+\alpha}$ we obtain that
\begin{align*}
	\Real b(u,u) & \ge -c \|u\|_V \|j(u)\|_H
		\ge -c M \|u\|_V^{1+\alpha} \|j(u)\|_H^{1-\alpha} \\
		& \ge -c M \eps \|u\|_V^2 - c M c_{\eps,p} \|j(u)\|_H^2
\end{align*}
for all $u \in V$. For $\eps \coloneqq \frac{\mu}{2cM}$ we thus obtain that
\[
	\Real a(u,u) + \Real b(u,u) - (\omega - c M c_{\eps,p}) \|j(u)\|_H^2 \ge \frac{\mu}{2} \|u\|_V^2
\]
for all $u \in V$, which is the claim.
\end{proof}

\begin{rem}
In the classical case Proposition~\ref{lemma:perturb} coincides with~\cite[Lemma~2.1]{Mug08}.
\end{rem}

For the second perturbation theorem we need the following simple lemma.
\begin{lemma}\label{lem:eberlein}
	Let $V$ be a reflexive Banach space, $T\colon V \to H$ an injective bounded linear operator
	into a Banach space $H$ and $S\colon V \to Z$ a compact linear operator into a Banach
	space $Z$. Then for every $\eps > 0$ there exists $c_\eps \ge 0$ such that
	\[
		\|Su\|_Z \le \eps \|u\|_V + c_\eps \|Tu\|_H\qquad \hbox{for all }u \in V.
	\]
\end{lemma}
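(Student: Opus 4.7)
The plan is to argue by contradiction, exploiting reflexivity of $V$ together with the compactness of $S$ and the injectivity of $T$. This is in the spirit of the classical Ehrling-type lemma.

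Suppose the conclusion fails. Then there exists $\eps_0 > 0$ such that for every $n \in \mathbb{N}$ we can find $u_n \in V$ with
\[
	\|Su_n\|_Z > \eps_0 \|u_n\|_V + n \|Tu_n\|_H.
\]
After normalisation we may assume $\|u_n\|_V = 1$, so that the right-hand side forces $\eps_0 + n \|Tu_n\|_H < \|Su_n\|_Z \le \|S\|$. In particular $\|Tu_n\|_H \to 0$, while $\|Su_n\|_Z > \eps_0$ for every $n$.

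Next I would use reflexivity of $V$: since $(u_n)$ is bounded, a subsequence $(u_{n_k})$ converges weakly to some $u \in V$. Since $T$ is bounded (hence weak-weak continuous), $Tu_{n_k} \rightharpoonup Tu$ in $H$, while we already know $Tu_{n_k} \to 0$ strongly. Therefore $Tu = 0$, and the injectivity of $T$ gives $u = 0$. On the other hand, compactness of $S$ implies $Su_{n_k} \to Su = 0$ strongly in $Z$, contradicting $\|Su_{n_k}\|_Z > \eps_0 > 0$.

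The proof is essentially routine once the contradiction set-up is in place; the only point requiring minor care is to ensure that all three hypotheses (reflexivity, compactness of $S$, injectivity of $T$) enter at the right moment: reflexivity to extract a weak limit, compactness to upgrade weak convergence under $S$ to strong, and injectivity to identify the weak limit as zero using the fact that $Tu_n \to 0$.
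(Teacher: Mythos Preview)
Your proof is correct and follows essentially the same contradiction argument as the paper's; the only cosmetic difference is that you normalise $\|u_n\|_V = 1$ whereas the paper normalises $\|Su_n\|_Z = 1$, but in either case the same three ingredients (reflexivity for the weak limit, injectivity of $T$ to identify it as zero, compactness of $S$ for the contradiction) enter in the same way.
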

\begin{proof}
	Assume to the contrary that there exist $\eps_0 > 0$ and a sequence $(u_n)_{n\in\mathbb N} \subset V$ such that
	\[
		\|Su_n\|_Z \ge \eps_0 \|u_n\|_V + n \|Tu_n\|_H\qquad \hbox{for all }n\in\mathbb N.
	\]
	We can assume that $\|Su_n\|_Z = 1$ after rescaling. Passing to a subsequence
	we have $u_n \rightharpoonup u$ in $V$, hence
	$Tu_n \rightharpoonup Tu$ in $H$. Now $\|Tu_n\|_H \le \frac{1}{n}$ implies that $Tu = 0$
	and thus $u = 0$. Hence by compactness $\lim_{n\to \infty}Su_n= Su = 0$ in $Z$, contradicting
	$\|Su_n\|_Z = 1$.
\end{proof}

The conclusion of the following perturbation result should be compared with Remark~\ref{rem:remelliptic}.
\begin{prop}\label{prop:perturb}
	Let $a$ be a $j$-elliptic form on $V$. Let $S$ be a compact operator from
	$V$ into a Banach space $Z$ and let $b_0\colon V \times Z \to \mathbb{C}$
	be a bounded sesquilinear form.	Define $b(u,v) \coloneqq b_0(u,Sv)$ on $V
	\times V$. If $j$ is injective on $V(a+b)$, where $V(a+b)$ is defined
	as in~\eqref{eq:Va}, then there exist $\omega' \in
	\mathbb{R}$ and $\mu' > 0$ such that
	\begin{equation}
	\label{eq:jremark}
		\Real a(u,u) + \Real b(u,u) - \omega' \|j(u)\|_H^2 \ge \mu' \|u\|_V^2 \quad\text{for all } u \in V(a+b).
	\end{equation}
\end{prop}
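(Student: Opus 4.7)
The plan is to argue by combining two ingredients: the auxiliary Lemma~\ref{lem:eberlein} and Young's inequality. First I observe that $V(a+b)$ is a closed subspace of $V$: it is the intersection over $v \in \Kern j$ of the kernels of the continuous linear functionals $u \mapsto (a+b)(u,v)$, and hence a reflexive Hilbert space in its own right. This lets me apply Lemma~\ref{lem:eberlein} with $T \coloneqq j|_{V(a+b)}$, which is injective by hypothesis, and with the still-compact restriction $S|_{V(a+b)} \colon V(a+b) \to Z$. I thereby obtain, for every $\eps > 0$, a constant $c_\eps \ge 0$ such that
\[
	\|Su\|_Z \le \eps \|u\|_V + c_\eps \|j(u)\|_H \qquad \hbox{for all } u \in V(a+b).
\]

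Next, denoting by $C$ the norm of the bounded sesquilinear form $b_0$, I estimate $|\Real b(u,u)| \le |b_0(u,Su)| \le C \|u\|_V \|Su\|_Z$. Plugging in the bound above gives
\[
	|\Real b(u,u)| \le C\eps \|u\|_V^2 + C c_\eps \|u\|_V \|j(u)\|_H,
\]
and a single application of Young's inequality to the cross term yields an estimate of the shape $\Real b(u,u) \ge -\eta \|u\|_V^2 - K_\eta \|j(u)\|_H^2$ with $\eta > 0$ freely chosen and $K_\eta \ge 0$ adjusted accordingly. Adding this to the $j$-ellipticity estimate~\eqref{eq:elliptic} for $a$ and choosing $\eta < \mu$ yields~\eqref{eq:jremark} with $\mu' \coloneqq \mu - \eta > 0$ and $\omega' \coloneqq K_\eta - \omega$.

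The decisive step is the application of Lemma~\ref{lem:eberlein}: the injectivity hypothesis on $j|_{V(a+b)}$ enters exclusively at that point, because without it the compactness of $S$ alone would not separate the $V$-norm from the $H$-norm in the bound on $\|Su\|_Z$, and the perturbation $b$ could no longer be absorbed into the ellipticity estimate for $a$. Everything else is routine manipulation, so I expect no further obstacle beyond verifying the closedness of $V(a+b)$ in $V$, which is immediate from the continuity of $a+b$ on $V\times V$.
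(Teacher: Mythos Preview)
Your proposal is correct and follows essentially the same approach as the paper's own proof: apply Lemma~\ref{lem:eberlein} on the closed subspace $V(a+b)$ using the injectivity of $j|_{V(a+b)}$, bound $|b(u,u)|\le C\|u\|_V\|Su\|_Z$, and then absorb the resulting terms via Young's inequality into the $j$-ellipticity estimate for $a$. The only slip is a sign in the final constant: one obtains $\omega' = \omega - K_\eta$ rather than $K_\eta - \omega$, but since $\omega'$ merely needs to be some real number this is immaterial.
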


\begin{proof}
	Regarding $j$ as an injective operator on $V(a+b)$, from Lemma~\ref{lem:eberlein} we obtain that
	\[
		\|Su\|_Z \le \eps \|u\|_V + c_\eps \|j(u)\|_H
	\]
	for all $u \in V(a+b)$. Hence
	\begin{align*}
		|b(u,u)|
			& = |b_0(u,Su)|
			\le c \|u\|_V \|Su\|_Z \\
			& \le \eps c \|u\|_V^2 + c_\eps c \|u\|_V \|j(u)\|_H
			\le \eps c \|u\|_V^2 + \delta c_\eps c \|u\|_V^2 + \frac{c_\eps c}{4\delta} \|j(u)\|_H^2
	\end{align*}
	for $u \in V(a+b)$ by Young's inequality. If we first pick $\eps > 0$ small enough and then $\delta > 0$,
	we easily obtain the claimed estimate from the $j$-ellipticity of $a$.
\end{proof}

Strictly speaking, the preceding result is not quite a perturbation result because
we leave the class of $j$-elliptic forms. It is, however,
quite useful in situations where one cannot expect that a lower order perturbation preserves $j$-ellipticity,
see~\cite[\S 4.4]{AreEls09} for such an example.

We continue our investigation of $j$-elliptic forms with results about domination and convergence.
It is well-known that domination of self-adjoint operators in terms of their resolvents
can be expressed via their quadratic forms. One implication of this characterisation remains
true for symmetric $j$-elliptic forms. The following proposition is a direct consequence of Remark~\ref{jclass}
and~\cite[Thm.~VI.2.21]{Kat95}.

\begin{prop}\label{prop:dom}
	Let $H$ be a Hilbert space, let $a_1$ be a symmetric $j_1$-elliptic form and let $a_2$ be a symmetric $j_2$-elliptic form, where
	$j_1\colon V_1 \to H$ and $j_2\colon V_2 \to H$.
	Let $A_i$ be the self-adjoint operator on $H$ which is associated with $a_i$, $i=1,2$.
	We say that $a_1$ \emph{lies above} $a_2$ (and write $(a_1,j_1) \ge (a_2,j_2)$) if
	\begin{enumerate}[(1)]
	\item $j_1(V_1) \subset j_2(V_2)$ and
	\item $a_1(u_1,u_1) \ge a_2(u_2,u_2)$ whenever $j_1(u_1) = j_2(u_2)$.
	\end{enumerate}
	In this case $(\gamma + A_1)^{-1} \le (\gamma + A_2)^{-1}$ in the sense of positive definite operators
	for all sufficiently large $\gamma \in \mathbb{R}$.
\end{prop}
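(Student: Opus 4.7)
The plan is to reduce everything to the classical setting so that Kato's monotonicity theorem~\cite[Thm.~VI.2.21]{Kat95} applies directly. By Remark~\ref{jclass}, for each $i=1,2$ the form $a_i$ is associated to the same self-adjoint operator $A_i$ as the classical form $\tilde a_i \coloneqq a_i|_{V(a_i)\times V(a_i)}$ with $\tilde j_i \coloneqq j_i|_{V(a_i)}$; moreover $\tilde j_i$ is injective with dense range $\tilde j_i(V(a_i)) = j_i(V_i)$ in $H$. Identifying $V(a_i)$ with the dense subspace $\tilde V_i \coloneqq j_i(V_i) \subseteq H$, I obtain two classical symmetric elliptic forms on $H$ sharing the associated operators $A_1$ and $A_2$.

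Next I rephrase the hypothesis in this classical picture. The inclusion $j_1(V_1) \subset j_2(V_2)$ reads $\tilde V_1 \subset \tilde V_2$. For any $x \in \tilde V_1$ there is a unique $u_1 \in V(a_1)$ with $j_1(u_1) = x$ and, since $\tilde V_1 \subset \tilde V_2$, a unique $u_2 \in V(a_2)$ with $j_2(u_2) = x$; the hypothesis then gives
\[
\tilde a_1(x,x) \;=\; a_1(u_1,u_1) \;\ge\; a_2(u_2,u_2) \;=\; \tilde a_2(x,x).
\]
So $(\tilde a_1, \tilde V_1)$ lies above $(\tilde a_2, \tilde V_2)$ in the usual Kato sense.

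Finally, since each $\tilde a_i$ is bounded below on $\tilde V_i$ (by the $j_i$-ellipticity estimate~\eqref{eq:elliptic}), for $\gamma \in \mathbb{R}$ large enough both $\tilde a_i + \gamma \|\cdot\|_H^2$ are strictly positive closed symmetric forms in $H$ associated with $\gamma + A_i$. Applying~\cite[Thm.~VI.2.21]{Kat95} to these shifted forms yields $(\gamma + A_1)^{-1} \le (\gamma + A_2)^{-1}$ in the sense of positive self-adjoint operators, which is the conclusion.

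The only subtle point is the first step: verifying that the reduction via Remark~\ref{jclass} really produces, on the $H$-side, a classical symmetric form whose associated operator in Kato's sense coincides with $A_i$. This is built into Remark~\ref{jclass}; once that is in hand, the monotonicity of resolvents follows from the classical theorem with no further work.
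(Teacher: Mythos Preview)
Your proof is correct and follows exactly the approach indicated in the paper, which states that the proposition is a direct consequence of Remark~\ref{jclass} and~\cite[Thm.~VI.2.21]{Kat95}. You have simply spelled out the details of that reduction: passing to the classical forms on $V(a_i)$, verifying that the domination hypothesis survives this identification, and then invoking Kato's monotonicity theorem for the shifted forms.
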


We also give a result concerning the domination of the spectra in the case where reference spaces $H$ differ.
The following is an easy consequence of the Courant--Fischer theorem for self-adjoint operators (or, rather,
their quadratic forms) and Remark~\ref{jclass}.

\begin{lemma}\label{lem:minimax}
Let $a$ be a symmetric $j$-elliptic form on a Hilbert space $H$ with form
domain $V$ and associated operator $A$. If $j$ is compact, then the
self-adjoint operator $A$ has compact resolvent, and we can order the eigenvalues
of $A$ in increasing order, i.e.,
\[
	\lambda_1(A) \le \lambda_2(A) \le \lambda_3(A) \le \dots \le \lambda_n(A) \to \infty,
\]
taking into account multiplicities. In this case, the eigenvalues are given by the min-max principle
\[
	\lambda_k(A) = \min_{\substack{E \subset V(a) \\ \dim E = k}} \max_{\substack{u \in E \\ u \neq 0}} \frac{a(u,u)}{\|j(u)\|_H^2},
\]
i.e., $E$ runs over the $k$-dimensional subspaces of $V(a)$.
\end{lemma}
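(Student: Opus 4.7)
The plan is to reduce the statement to the classical case via Remark~\ref{jclass} and then invoke the standard Courant--Fischer min-max principle for self-adjoint operators that are bounded below and have compact resolvent.

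First I would pass from $(a,j)$ to the classical form $(a_{|V(a)\times V(a)}, j_{|V(a)})$, which by Remark~\ref{jclass} is associated with the same operator $A$. Because $V(a)$ is a closed subspace of $V$, the restriction $j_{|V(a)}$ inherits compactness; by Remark~\ref{jclass} it is also injective and has dense range in $H$. Thus we may view $V(a)$ as a Hilbert space compactly and densely embedded into $H$, and the restricted sesquilinear form is a symmetric, continuous, elliptic classical form on it.

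Next I would establish compactness of the resolvent. Choosing $\gamma \in \mathbb{R}$ large enough that $a + \gamma\,\|j(\cdot)\|_H^2$ is coercive on $V(a)$, the Lax--Milgram theorem shows that $(\gamma + A)^{-1}\colon H \to H$ factorises as $H \to V(a) \hookrightarrow H$, where the first arrow is bounded. The compactness of the embedding $V(a) \hookrightarrow H$ then forces $(\gamma+A)^{-1}$ to be compact. Together with the self-adjointness and lower boundedness of $A$, this yields a discrete spectrum consisting of eigenvalues tending to $+\infty$, which we arrange in increasing order with multiplicity.

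Finally I would apply the standard Courant--Fischer principle in its form-theoretic version: for a self-adjoint operator $A$ bounded below with compact resolvent,
\[
	\lambda_k(A) + \gamma = \min_{\substack{F \subset D((A+\gamma)^{1/2}) \\ \dim F = k}} \max_{\substack{x \in F \\ x \neq 0}} \frac{\|(A+\gamma)^{1/2} x\|_H^2}{\|x\|_H^2}.
\]
In the classical setting, $D((A+\gamma)^{1/2})$ coincides with the form domain, namely $j(V(a))$, and $\|(A+\gamma)^{1/2} j(u)\|_H^2 = a(u,u) + \gamma\|j(u)\|_H^2$ for $u \in V(a)$. Substituting these identities, cancelling $\gamma$, and using the bijection $j_{|V(a)}\colon V(a) \to j(V(a))$ to transport the minimisation from $k$-dimensional subspaces of $j(V(a))$ to $k$-dimensional subspaces of $V(a)$ yields exactly the asserted formula.

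The only real obstacle lies in this last identification: one must carefully check that the form domain in the sense of the abstract theory (i.e.\ $D((A+\gamma)^{1/2})$) matches $j(V(a))$ after the reduction to a classical form, so that the Rayleigh quotient $\|(A+\gamma)^{1/2} x\|_H^2/\|x\|_H^2$ translates cleanly into $(a(u,u)+\gamma\|j(u)\|_H^2)/\|j(u)\|_H^2$; once that is in place the conclusion is immediate.
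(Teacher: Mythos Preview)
Your proposal is correct and follows exactly the route the paper indicates: the paper gives no detailed proof of this lemma, stating only that it is ``an easy consequence of the Courant--Fischer theorem for self-adjoint operators (or, rather, their quadratic forms) and Remark~\ref{jclass}.'' You have simply spelled out that reduction---passing to the classical form on $V(a)$, obtaining compactness of the resolvent from compactness of $j_{|V(a)}$, and transporting the classical min-max formula back via the bijection $j_{|V(a)}$---which is precisely what the paper intends.
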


The following theorem allows the comparison of
operators on different spaces that have comparable $j$-elliptic forms.
\begin{theo}\label{thm:compspec}
	Let $V_1$, $V_2$, $H_1$ and $H_2$ be Hilbert spaces such that $V_2$ is a closed subspace of $V_1$,
	which is equipped with the norm of $V_1$.
	Let $a_1$ be a symmetric $j_1$-elliptic form, where $j_1 \colon V_1 \to H_1$ is compact,
	and let $a_2$ be a symmetric $j_2$-elliptic form, where $j_2 \colon V_2 \to H_2$ is bounded.
	Assume that $\Kern j_1 \subset V_2$ and that
\begin{equation}\label{eq:courantgeneral}
	\|j_1(u)\|_{H_1} \ge \|j_2(u)\|_{H_2}\qquad \hbox{and}\qquad a_1(u,u) \le a_2(u,u)\qquad \hbox{for all }u \in V_2.
\end{equation}
	Then $j_2$ is compact and $\lambda_k(A_1) \le \lambda_k(A_2)$ for all $k \in \mathbb{N}$, where
$A_1$ and $A_2$ are the operators  associated with $a_1$ and $a_2$ on $H_1$ and $H_2$, respectively.
\end{theo}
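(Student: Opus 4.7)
The plan is to handle the two conclusions in order: first the compactness of $j_2$, by a Cauchy-extraction argument from the compactness of $j_1$; and then the spectral inequality, by applying the min-max formula of Lemma~\ref{lem:minimax} to both operators while transporting trial subspaces for $A_2$ into trial subspaces for $A_1$ through the canonical decomposition $V_1 = V(a_1) \oplus \Kern j_1$ of Remark~\ref{jclass}.

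Compactness of $j_2$ is short. Given a bounded sequence $(u_n) \subset V_2$, which is bounded also in $V_1$ because $V_2$ carries the $V_1$-norm, the compactness of $j_1$ extracts a subsequence with $(j_1(u_{n_k}))$ Cauchy in $H_1$; applying the hypothesis $\|j_2(u)\|_{H_2} \le \|j_1(u)\|_{H_1}$ on $V_2$ to differences makes $(j_2(u_{n_k}))$ Cauchy in $H_2$, so $j_2$ is compact and Lemma~\ref{lem:minimax} applies to $A_2$ as well. The same norm inequality on $\Kern j_1 \subset V_2$ forces $\Kern j_1 \subset \Kern j_2$, a fact I will use below.

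For the spectral inequality I fix $k$ and an arbitrary $k$-dimensional subspace $E \subset V(a_2)$. Since $E \subset V_2 \subset V_1$, Remark~\ref{jclass} decomposes each $u \in E$ uniquely as $u = u^\uparrow + u^\downarrow$ with $u^\uparrow \in V(a_1)$ and $u^\downarrow \in \Kern j_1$, and I set $E' := \{u^\uparrow : u \in E\} \subset V(a_1)$. The map $u \mapsto u^\uparrow$ is linear and injective on $E$: if $u^\uparrow = 0$ then $u = u^\downarrow \in \Kern j_1 \subset \Kern j_2$, so $u \in V(a_2) \cap \Kern j_2 = \{0\}$ by the injectivity of $j_2|_{V(a_2)}$ from Remark~\ref{jclass}; hence $\dim E' = k$. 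For $w = u^\uparrow$ the identity $j_1(u^\downarrow) = 0$ gives $\|j_1(w)\|_{H_1} = \|j_1(u)\|_{H_1} \ge \|j_2(u)\|_{H_2}$, and expanding $u = w + u^\downarrow$ by symmetry together with $a_1(w, \cdot) = 0$ on $\Kern j_1$ yields
\[
	a_1(w, w) = a_1(u, u) - a_1(u^\downarrow, u^\downarrow) \le a_1(u, u) \le a_2(u, u),
\]
where the first inequality follows from $a_1(u^\downarrow, u^\downarrow) \ge \mu \|u^\downarrow\|_{V_1}^2 \ge 0$ (itself an application of $j_1$-ellipticity to $u^\downarrow \in \Kern j_1$) and the second from the hypothesis $a_1 \le a_2$ on $V_2$.

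Dividing these estimates then yields $a_1(w,w)/\|j_1(w)\|_{H_1}^2 \le a_2(u,u)/\|j_2(u)\|_{H_2}^2$, so taking the supremum over $u \in E$ (equivalently over $w \in E'$) and the infimum over $E$ gives $\lambda_k(A_1) \le \lambda_k(A_2)$ by Lemma~\ref{lem:minimax}. The main obstacle is precisely this final pointwise ratio step: dividing a form-value inequality by a reversed norm inequality is clean only once the numerators are known to be nonnegative, so in full generality one should first perform a common spectral shift $A_i \mapsto A_i + cI$, equivalently replace $a_i$ by $a_i + c(j_i(\cdot)\mid j_i(\cdot))_{H_i}$, with $c$ large enough to make both Rayleigh quotients positive; the claimed inequality of $\lambda_k$'s is unaffected by such a shift, and the interaction of this shift with the hypotheses $\|j_1\|\ge\|j_2\|$ and $a_1 \le a_2$ is the delicate technical point that requires verification.
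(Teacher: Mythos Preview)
Your compactness argument is identical to the paper's. For the spectral inequality you take a direct route---projecting each $u \in E \subset V(a_2)$ onto $u^\uparrow \in V(a_1)$ along $\Kern j_1$---whereas the paper factors the comparison into three intermediate steps (restrict $V_1$ to $V_2$ keeping $a_1,j_1$; then replace $a_1$ by $a_2$ keeping $j_1|_{V_2}$; then replace $j_1|_{V_2}$ by $j_2$ keeping $a_2$) and handles each by a short min--max argument. Your single projection is more economical and produces the same pair of estimates $a_1(w,w)\le a_2(u,u)$ and $\|j_1(w)\|\ge\|j_2(u)\|$ that the paper reaches only after chaining its three steps; the paper's decomposition has the modest advantage that in its first two steps the Rayleigh denominators coincide, so no sign issue arises there.

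Your closing concern is justified, and the shift you propose does \emph{not} repair it: replacing $a_i$ by $a_i + c\|j_i(\cdot)\|^2$ adds $c\|j_1(u)\|^2 \ge c\|j_2(u)\|^2$, i.e.\ more to the left side than to the right, so the shifted forms need not satisfy $\tilde a_1 \le \tilde a_2$ on $V_2$. Indeed the inequality of Rayleigh quotients is genuinely false for negative forms: with $V_1=V_2=H_1=H_2=\mathbb{C}$, $a_1=a_2=-|\cdot|^2$, $j_1=2\,\mathrm{id}$, $j_2=\mathrm{id}$, all hypotheses of the theorem hold yet $\lambda_1(A_1)=-\tfrac14>-1=\lambda_1(A_2)$. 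The paper's proof runs into exactly the same obstruction in its case~(iii), where ``proceed as in the first case'' still requires $\tfrac{a_2(u,u)}{\|j_1(u)\|^2}\le\tfrac{a_2(u,u)}{\|j_2(u)\|^2}$ pointwise, and this fails when $a_2(u,u)<0$. Both your argument and the paper's become correct once one assumes $a_2\ge 0$ (or merely $\lambda_k(A_2)\ge 0$), which is the case in all of the applications the paper makes of this result.
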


\begin{proof}
	Let $(u_n)$ be a bounded sequence in $V_2$.
	Then $(u_n)$ is a bounded sequence in $V_1$. Passing to a subsequence we can assume that
	$(j_1(u_n))$ converges in $H_1$. Since
	\[
		\|j_2(u_n) - j_2(u_m)\|_{H_2} \le \|j_1(u_n) - j_1(u_m)\|_{H_1}
	\]
	by~\eqref{eq:courantgeneral} this implies that $(j_2(u_n))$ is a Cauchy sequence in $H_2$,
	hence convergent. We have proved compactness of $j_2$.

	For the spectral domination it suffices to consider the following three special cases:
	\begin{enumerate}[(i)]
	\item
		$a_2 = a_1|_{V_2 \times V_2}$ and $j_2 = j_1|_{V_2}$; or
	\item
		$V_1 = V_2$ and $j_1 = j_2$; or
	\item
		$V_1 = V_2$ and $a_1 = a_2$.
	\end{enumerate}
	In fact, once we have established the result in these situations, we obtain that
	\[
		\lambda_k(a_1,j_1)
			\le \lambda_k(a_1|_{V_2 \times V_2},j_1|_{V_2})
			\le \lambda_k(a_2, j_1|_{V_2})
			\le \lambda_k(a_2,j_2)
		\quad (k \in \mathbb{N}).
	\]
	Here we have defined $\lambda_k(a,j) \coloneqq \lambda_k(A)$ with $A$ associated to $(a,j)$ to keep the notation simple.
	It should be noted that $a_1|_{V_2 \times V_2}$ is $j_1|_{V_2}$-elliptic since $V_2 \subset V_1$
	and $a_2$ is $j_1|_{V_2}$-elliptic since $a_2(u,u) \ge a_1(u,u)$ on $V_2$.

	So let us prove the theorem in those three cases.
	\begin{enumerate}[(i)]
	\item
		Assume that $a_2 = a_1|_{V_2 \times V_2}$ and $j_2 = j_1|_{V_2}$.
		Since $\Kern j_1 \subset V_2$, this implies that $\Kern j_1 = \Kern j_2$.
		Thus trivially $V(a_2) \subset V(a_1)$, see~\eqref{eq:Va}, implying that every
		subspace of $V(a_2)$ is a subspace of $V(a_1)$. Hence $\lambda_k(A_1) \le \lambda_k(A_2)$
		for all $k \in \mathbb{N}$ by Lemma~\ref{lem:minimax}.
	\item
		Assume that $V_1 = V_2 \eqqcolon V$ and $j_1 = j_2 \eqqcolon j$.
		Let $k \in \mathbb{N}$ be arbitrary and fix a subspace $E_2$ of $V(a_2)$ with $\dim E_2 = k$ such that
		\[
			\lambda_k(A_2) = \max_{\substack{u \in E_2 \\ u \neq 0}} \frac{a_2(u,u)}{\|j(u)\|^2}
		\]
		Then in particular
		\begin{equation}\label{eq:lkA2}
			\lambda_k(A_2) \ge \max_{\substack{u \in E_2 \\ u \neq 0}} \frac{a_1(u,u)}{\|j(u)\|^2}
		\end{equation}
		by~\eqref{eq:courantgeneral}. Define
		\[
			E_1 \coloneqq \{ u \in V(a_1) : j(u) \in j(E_2) \}.
		\]
		Since $j$ is bijective from $V(a_1)$ and $V(a_2)$ to $j(V)$, respectively, see Remark~\ref{jclass},
		we have $\dim E_1 = k$, thus
		\begin{equation}\label{eq:lkA1}
			\lambda_k(A_1) \le \max_{\substack{u \in E_1 \\ u \neq 0}} \frac{a_1(u,u)}{\|j(u)\|^2}
		\end{equation}
		by Lemma~\ref{lem:minimax}.
		In view of~\eqref{eq:lkA2} and~\eqref{eq:lkA1} the theorem is proved once we show that for every $u \in E_1$
		there exists $\tilde{u} \in E_2$ such that $a_1(u,u) \le a_1(\tilde{u},\tilde{u})$ and $j(u) = j(\tilde{u})$.

		Thus fix $u \in E_1 \subset V(a_1)$. By definition of $E_1$ there exists $\tilde{u} \in E_2$
		such that $j(u) = j(\tilde{u})$. By Remark~\ref{jclass} there exist $\tilde{u}_1 \in V(a_1)$ and $\tilde{u}_2 \in \Kern j$
		such that $\tilde{u} = \tilde{u}_1 + \tilde{u}_2$, so in particular $j(u) = j(\tilde{u}) = j(\tilde{u}_1)$.
		Since $j$ is injective on $V(a_1)$, this implies that $u = \tilde{u}_1$. Hence
		\begin{align*}
			a_1(\tilde{u},\tilde{u})
				& = a_1(u + \tilde{u}_2, u + \tilde{u}_2) \\
				& = a_1(u,u) + 2 \Real a_1(u, \tilde{u}_2) + a_1(\tilde{u}_2,\tilde{u}_2)
				\ge a_1(u,u)
		\end{align*}
		since $a_1(u,\tilde{u}_2) = 0$ by definition of $V(a_1)$ and $a_1(\tilde{u}_2,\tilde{u}_2) \ge 0$
		by~\eqref{eq:elliptic}.
	\item
		Assume that $V_1 = V_2$ and $a_1 = a_2 \eqqcolon a$.
		From~\eqref{eq:courantgeneral} we obtain that $\Kern j_1 \subset \Kern j_2$, which implies
		$V(a_2) \subset V(a_1)$. Now we can proceed as in the first case.
	\qedhere
	\end{enumerate}
\end{proof}

For semigroups on $L^2(\Omega)$ associated with classical forms,
ultra-contractivity is well-known to be equivalent to an embedding of the form
domain into $L^q(\Omega)$ for $q > 2$, provided that the semigroup extends to a
contractive semigroup on $L^\infty(\Omega)$.
We translate this result into the language of $j$-elliptic forms, which will be
useful in the subsequent sections when we study Gibbs semigroups.

\begin{prop}\label{prop:ultra}
Let $\Omega$ be a $\sigma$-finite measure space. Let $a$ be a $j$-elliptic form on $H \coloneqq L^2(\Omega)$
with form domain $V$ and associated operator $A$. Assume that there exists $M \ge 0$
such that $\|e^{-tA} f\|_\infty \le M \|f\|_\infty$ for all $f \in L^\infty(\Omega) \cap L^2(\Omega)$
and all $t \in [0,1]$.
Assume moreover that $j(V)\subset L^\frac{2d}{d-2}(\Omega)$ for some $d > 2$.
Then $(e^{ta})_{t\ge 0}$ is ultra-contractive, i.e., $e^{-tA}L^2(\Omega) \subset L^\infty(\Omega)$ and
$$\|e^{-tA}\|_{{\mathcal  L}(L^2 ,L^\infty)} \le ct^{-\frac{d}{4}},\qquad t \in (0,1],$$ for some constant $c>0$. 
\end{prop}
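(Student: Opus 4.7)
My plan is to adapt the classical Varopoulos/Moser iteration to the $j$-elliptic setting. First, by Remark~\ref{jclass} the form $(a|_{V(a)\times V(a)}, j|_{V(a)})$ is classical and associated with the same operator $A$, while $j(V(a)) = j(V) \subset L^{2d/(d-2)}(\Omega)$, so I may assume without loss of generality that $j$ is injective. Replacing $A$ by $A + \omega_0$ for sufficiently large $\omega_0$ (which multiplies the semigroup by the factor $e^{-\omega_0 t}$, bounded on $(0,1]$) I may also take $\omega = 0$ in~\eqref{eq:elliptic}, so that $\Real a(u,u) \ge \mu \|u\|_V^2$ for all $u \in V$.

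Next I would turn the inclusion $j(V) \subset L^{2d/(d-2)}(\Omega)$ into a Sobolev-type inequality. The operator $j \colon V \to L^{2d/(d-2)}(\Omega)$ has closed graph: if $u_n \to u$ in $V$ and $j(u_n) \to g$ in $L^{2d/(d-2)}(\Omega)$, then continuity of $j$ into $L^{2}(\Omega)$ together with convergence in measure on the $\sigma$-finite space $\Omega$ forces $g = j(u)$. Hence it is bounded, and combined with coercivity this yields
\[
  \|j(u)\|_{L^{2d/(d-2)}}^{2} \le C_S\,\Real a(u,u) \qquad (u \in V).
\]
From this I would extract the single-step bound $\|e^{-tA}\|_{L^2 \to L^{2d/(d-2)}} \le c_0\, t^{-1/2}$ on $(0,1]$: given $f \in L^{2}(\Omega)$, analyticity of the semigroup provides $\|Ae^{-tA}f\|_{2} \le C t^{-1}\|f\|_{2}$, so for $u = e^{-tA}f = j(w)$ with $w \in V(a)$ one has $\Real a(w,w) = \Real (Au \mid u)_{H} \le C' t^{-1}\|f\|_{2}^{2}$, and the Sobolev inequality bounds $\|u\|_{L^{2d/(d-2)}}^{2}$ accordingly.

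Finally I run the Varopoulos iteration. Setting $p_{0} = 2$ and $p_{k+1} = p_{k}\,d/(d-2)$, so that $p_{k} = 2(d/(d-2))^{k} \to \infty$, Riesz--Thorin interpolation between the single-step bound and the hypothesis $\|e^{-tA}\|_{L^{\infty} \to L^{\infty}} \le M$ yields
\[
  \|e^{-tA}\|_{L^{p_{k}} \to L^{p_{k+1}}} \le c_{0}^{2/p_{k}}\, M^{1-2/p_{k}}\, t^{-1/p_{k}}.
\]
Splitting $e^{-tA} = e^{-t_{n-1}A} \circ \cdots \circ e^{-t_{0}A}$ with $\sum_{k} t_{k} = t$ and applying this at each stage, the exponents $1/p_{k}$ telescope to the convergent geometric series $\sum_{k=0}^{\infty} \tfrac{1}{2}((d-2)/d)^{k} = d/4$, which produces the time dependence $t^{-d/4}$. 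Letting $n \to \infty$ and using $\|e^{-tA}f\|_{\infty} = \lim_{p \to \infty}\|e^{-tA}f\|_{p}$ for $f \in L^{2} \cap L^{\infty}$, then extending to $f \in L^{2}(\Omega)$ by weak-$*$ compactness in $L^{\infty}$, I obtain $\|e^{-tA}\|_{L^{2} \to L^{\infty}} \le c\, t^{-d/4}$ for $t \in (0,1]$. The delicate step is the control of the product of constants in this iteration, which is the standard but bookkeeping-heavy heart of the Varopoulos/Moser argument and dictates the careful choice of the splitting $(t_{k})$.
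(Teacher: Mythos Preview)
Your proposal is correct and follows the same route as the paper: reduce to the classical (injective) case via Remark~\ref{jclass}, use the closed graph theorem to obtain boundedness of $j\colon V\to L^{2d/(d-2)}(\Omega)$, and then deduce ultra-contractivity from the resulting Sobolev-type inequality. The only difference is that the paper outsources the final step to~\cite[Thm.~6.4]{Ouh05}, whereas you unpack that reference and write out the Varopoulos iteration by hand; your sketch of that iteration (single-step $L^2\to L^{2d/(d-2)}$ bound from analyticity and coercivity, Riesz--Thorin against the assumed $L^\infty$-bound, geometric summation of the exponents to $d/4$) is the standard argument and is carried out correctly.
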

\begin{proof}
By the closed graph theorem $j$ is bounded from $V$ to $L^{\frac{2d}{d-2}}(\Omega)$.
Thus the result follows from Remark~\ref{jclass} and~\cite[Thm.~6.4]{Ouh05}.
\end{proof}

\section{Convergence results}\label{schatten:sec}

Several results in~\cite{AreEls09} are based on a convergence result~\cite[Thm.~3.9]{AreEls09}.
We extend this criterion in the case of symmetric forms.
It is well-known that for symmetric classical forms the convergence in the sense of
Mosco, see~\cite{Mos67}, is equivalent to strong convergence of the resolvents. In fact, this
holds even in the nonlinear case and is typically stated
only in that situation. We show how this criterion translates to $j$-elliptic
forms.

\begin{theo}\label{thm:strongconv}
	Let $(a_n,j_n)_{n\in\mathbb N}$ and $(a,j)$ be positive forms on a Hilbert space $H$
	with form domains $(V_n)_{n\in\mathbb N}$ and $V$, respectively.
	We assume that $a_n$ is $j_n$-elliptic for all $n \in \mathbb{N}$ and $a$ is $j$-elliptic.
	Then the following are equivalent.
	\begin{enumerate}[(a)]
	\item
		The sequence of operators $(-A_n)_{n\in\mathbb N}$ associated with $(a_n,j_n)_{n\in\mathbb N}$ converges to the operator $-A$ associated with $(a,j)$ in the strong resolvent sense.
	\item
		The following conditions are satisfied:
		\begin{enumerate}[(i)]
		\item
			If $u_n \in V_n$, $j_n(u_n) \rightharpoonup x$ for some $x \in H$ and $\liminf_{n \to \infty} a_n(u_n,u_n) < \infty$, then
			there exists $u \in V$ such that $j(u) = x$ and $\liminf_{n \to \infty} a_n(u_n,u_n) \ge a(u,u)$;
		\item
			For all $u \in V$ there exists a sequence $(u_n)_{n\in \mathbb N}$ with $u_n\in V_n$ such that 
			$$\lim_{n\to \infty}j_n(u_n) = j(u)\qquad \hbox{and}\qquad \liminf_{n\to \infty}a_n(u_n,u_n) \le a(u,u).$$
		\end{enumerate}
	\end{enumerate}
	If these equivalent conditions are satisfied, we say that \emph{$(a_n,j_n)_{n\in\mathbb N}$ converges to $(a,j)$ in the sense of Mosco}.
\end{theo}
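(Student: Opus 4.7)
The plan is to reduce the claim to the classical Mosco convergence theorem for closed symmetric forms in $H$ by peeling off $\Kern j_n$ and $\Kern j$ via Remark~\ref{jclass}. Set $\tilde a_n \coloneqq a_n|_{V(a_n) \times V(a_n)}$, $\tilde j_n \coloneqq j_n|_{V(a_n)}$, and similarly define $\tilde a$, $\tilde j$. By Remark~\ref{jclass}, each $(\tilde a_n, \tilde j_n)$ is a classical positive $\tilde j_n$-elliptic form associated with the same operator $A_n$, and likewise $(\tilde a, \tilde j)$ is associated with $A$; in particular condition~(a) is insensitive to this reduction.

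The first real step is to show that~(i) and~(ii) are equivalent to their analogues in which $V_n$, $V$ are replaced by $V(a_n)$, $V(a)$. I would exploit the splitting $V_n = V(a_n) \oplus \Kern j_n$ furnished by Remark~\ref{jclass}. Any $u_n = v_n + w_n$ with $v_n \in V(a_n)$, $w_n \in \Kern j_n$ satisfies $j_n(u_n) = j_n(v_n)$, and, since $a_n$ is symmetric (by positivity and polarisation) and $a_n(v_n, w_n) = a_n(w_n, v_n) = 0$ by the defining property of $V(a_n)$,
\[
	a_n(u_n, u_n) = a_n(v_n, v_n) + a_n(w_n, w_n) \ge a_n(v_n, v_n).
\]
Thus a test sequence in $V_n$ can always be replaced by its $V(a_n)$-component without altering $j_n(u_n)$ and without increasing $a_n(u_n, u_n)$; conversely every sequence in $V(a_n)$ already lies in $V_n$, and the analogous splitting for the limit form handles both the $\liminf$ statement in~(i) and the recovery-sequence statement in~(ii) in either direction.

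After this reduction $\tilde j_n$ and $\tilde j$ are injective, so $V(a_n)$ may be identified with the subspace $\tilde j_n(V(a_n)) \subset H$. The forms $\tilde a_n$ then become closed positive sesquilinear forms on $H$ in Kato's sense, and~(i), (ii) translate literally into Mosco convergence of the associated quadratic functionals
\[
	\mathfrak a_n(x) \coloneqq \begin{cases} \tilde a_n(u,u), & x = \tilde j_n(u) \text{ with } u \in V(a_n), \\ +\infty, & \text{otherwise,} \end{cases}
\]
to $\mathfrak a$ on $H$. At this point I would invoke the classical equivalence between Mosco convergence of closed positive forms and strong resolvent convergence of their associated self-adjoint operators~\cite{Mos67} to conclude.

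The main obstacle is the equivalence in the second paragraph: it depends essentially on positivity, both to obtain symmetry of $a_n$ (which yields $a_n(v_n, w_n) = a_n(w_n, v_n) = 0$) and for the nonnegativity $a_n(w_n, w_n) \ge 0$. This is the structural reason the theorem is stated only for positive forms; everything beyond this step is book-keeping plus the standard Mosco theory.
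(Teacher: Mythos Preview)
Your proposal is correct and follows essentially the same route as the paper. Both arguments reduce to classical Mosco convergence on $H$ by passing to the restrictions $(a_n|_{V(a_n)\times V(a_n)},\,j_n|_{V(a_n)})$ via Remark~\ref{jclass}, use the decomposition $V_n=V(a_n)\oplus\Kern j_n$ (which yields the minimum representation $\phi_n(x)=\min\{a_n(u,u):j_n(u)=x\}$) to translate~(i) and~(ii) into the standard liminf/recovery-sequence conditions for the functionals on $H$, and then invoke the known equivalence with strong resolvent convergence; the paper merely phrases the last step through subdifferentials and cites Attouch rather than Mosco. One small aside: the inequality $a_n(w_n,w_n)\ge 0$ for $w_n\in\Kern j_n$ already follows from $j_n$-ellipticity alone (since $j_n(w_n)=0$), so positivity is not strictly needed there---its real role is to make the functionals $\mathfrak a_n$ nonnegative so that the classical Mosco theory applies without a shift.
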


\begin{proof}
	Define $\phi_n(j_n(u)) \coloneqq a_n(u,u)$ for $u \in V_n(a_n)$, and $\phi_n(x) \coloneqq \infty$
	for $x \in H \setminus j_n(V_n)$. 
	Then $\phi_n\colon H \to (-\infty,\infty]$ is well-defined, convex
	and lower semicontinuous, and $-A_n$ is the subdifferential of $\phi_n$.
	This follows from~\cite[Thm.~2.5]{AreEls09} and the well-known correspondence between the linear and
	the non-linear theory of forms.
	Moreover,
	\begin{equation}\label{eq:minrep}
		\phi_n(x) = \min\{ a(u,u) : u \in V_n, \; j_n(u) = x \}
	\end{equation}
	by Remark~\ref{jclass}.
	A similar statement holds for the functional $\phi$, which we define analogously for $(a,j)$.

	The two conditions in (b) are equivalent to
	\begin{enumerate}[(I)]
	\item
		$x_n \rightharpoonup x$ implies that $\liminf_{n \to \infty} \phi_n(x_n) \ge \phi(x)$;
	\item
		for all $x \in H$ there exists $(x_n) \subset H$ such that 
		$$\lim_{n\to \infty}x_n = x\qquad \hbox{and}\qquad \lim_{n\to \infty}\phi_n(x_n) = \phi(x).$$
	\end{enumerate}
	In fact, assume (i) and (ii). If $\liminf_{n \to \infty} \phi_n(x_n) = \infty$ in (I), then there is nothing
	to show. Otherwise, (I) follows from (i) and~\eqref{eq:minrep}. In (II), if $x \not\in j(V)$, i.e., $\phi(x) = \infty$,
	then by (I) any sequence $(x_n)$ in $H$ such that $\lim_{n\to \infty}x_n = x$ does the job.
	On the other hand, if $x = j(u)$ for some $u \in V(a)$, then (II) follows from (ii) and (I).
	On the contrary, if (I) and (II) are satisfied, then (i) and (ii) follow easily using~\eqref{eq:minrep}.

	We have shown that condition (b) is equivalent to Mosco-convergence of $\phi_n$ to $\phi$,
	which  by~\cite[Prop.~3.19 and Thm.~3.26]{Att84} is equivalent
	to strong resolvent convergence of the subdifferentials, i.e., to (a).
\end{proof}

\begin{rem}\label{rem:moscosym}
	The implication from (b) to (a) in Theorem~\ref{thm:strongconv} remains valid for symmetric, but not necessarily positive forms provided that there
	exists $\omega \le 0$ such that $a_n(u,u) - \omega \|j_n(u)\|_H^2 \ge 0$ for all $u \in V_n$ and
	$a(u,u) - \omega \|j(u)\|_H^2 \ge 0$ for all $u \in V$.
	In fact, assume that the conditions in (b) are fulfilled. Lower semicontinuity of the norm in $H$ yields
	that then also the positive forms $\tilde{a}_n$ and $\tilde{a}$ given by $\tilde{a}_n(u,v) \coloneqq a_n(u,v) - \omega {\scalar{j_n(u)}{j_n(v)}}_H$
	and $\tilde{a}(u,v) \coloneqq a(u,v) - \omega {\scalar{j(u)}{j(v)}}_H$ satisfy the conditions in (b).
	Now the theorem implies that the associated operators $(-A_n - \omega)$ converge to $(-A - \omega)$ in the strong
	resolvent sense, which trivially implies (a).
\end{rem}

Let $H_1$ and $H_2$ be separable Hilbert spaces.
For $p\in [1,\infty)$ the $p$-Schatten class is defined by
$${\mathcal L}_p(H_1,H_2):=\{T\in{\mathcal K}(H_1,H_2):\|T\|_{{\mathcal L}_p}:=\|(s_n)_{n\in\mathbb N}\|_{\ell^p}<\infty\},$$
where $(s_n)_{n\in\mathbb N}$ is the sequence of \emph{singular values} of $T$, i.e.,
the sequence of eigenvalues of $|T| \coloneqq (T^*T)^\frac12$. Then
$\|\cdot\|_{{\mathcal L}_p}$ is a complete norm on $\mathcal{L}_p(H_1,H_2)$, called the $p$-Schatten norm.
The operators in $\mathcal{L}_1(H_1,H_2)$ are also calls \emph{trace class operators} with the
\emph{trace norm}, and the operators in $\mathcal{L}_2(H_1,H_2)$ are called \emph{Hilbert--Schmidt operators}.
If $H_1 = H_2 = H$ we frequently write $\mathcal{L}_p(H)$ instead of $\mathcal{L}_p(H,H)$.
For more information about the Schatten classes we refer to~\cite{GohKre69,Sim05}.

We are mainly interested in semigroups consisting of Schatten class operators.
The following definition goes back to Dietrich A.\ Uhlenbrock~\cite{Uhl71} and first appeared
in applications in statistical mechanics.
Nowadays, Gibbs semigroups are popular objects in mathematical physics.

\begin{defi}
Let $H$ be a Hilbert space. A \emph{Gibbs semigroups} is a $C_0$-semigroup
$(T(t))_{t\ge 0}$ on $H$ such that each operator $T(t)$, $t>0$, is of trace class.
\end{defi}

\begin{rems}\label{versch}
\begin{enumerate}[(1)]
\item
Since
$\mathcal{L}_p(H) \cdot \mathcal{L}_{q}(H) \subset \mathcal{L}_r(H) \subset \mathcal{L}_{r'}(H)$ for $\frac{1}{r} = \frac{1}{p} + \frac{1}{q}$
and $r < r'$, every semigroup $(T(t))_{t \ge 0}$ for which there exists $p \in [1,\infty)$ such that $T(t) \in \mathcal{L}_p(H)$ for all $t > 0$
is a Gibbs semigroup. 
\item
Let $X$ be a finite measure space. It is known that each bounded linear
operator $T$ from $L^2(X)$ to $L^\infty(X)$ is a Hilbert--Schmidt
operator~\cite[Thm.~1.6.2]{Are06}. In particular every ultra-contractive semigroup on $L^2(X)$ is a
Gibbs semigroup. Hence Proposition~\ref{prop:ultra} provides a sufficient condition
for the Gibbs property, which is sometimes easy to check.
\item It seems to be difficult to characterise the Gibbs property in terms of the resolvent.
If $-A$ generates an analytic semigroup $(T(t))_{t \ge 0}$ on $H$ and $(\lambda + A)^{-k} \in \mathcal{L}_p(H)$
for some $k \in \mathbb{N}$, some $\lambda$ in the resolvent set and some $p \in [1,\infty)$, then $(T(t))_{t \ge 0}$ is a Gibbs
semigroup. In fact, since in that case the embedding
$D(A^k) \hookrightarrow H$ is of Schatten class and $T(t)\colon H \to D(A^k)$ is bounded for $t > 0$,
the ideal property implies that $T(t) \in \mathcal{L}_p(H)$ for all $t > 0$.

But the converse fails. In fact, consider the diagonal operator $A = D_\lambda$
on $\ell^2$ and $(T(t))_{t \ge 0} = (e^{-tA})_{t \ge 0}$, where $\lambda_n
\coloneqq \log^2 n$. Then the eigenvalues $e^{-t \log^2n} = n^{-t \log n}$ of
$T(t)$ are summable for every $t > 0$, i.e., $(T(t))_{t \ge 0}$ is a Gibbs
semigroup, but the eigenvalues $(\lambda+\log^2n)^{-k}$ of $(\lambda+A)^{-k}$ are
not $p$-summable for any $k \in \mathbb{N}$, $p \in [1,\infty)$ and $\lambda$ in the resolvent set.

\item The square root of the above operator $D_\lambda$ yields also another
interesting counterexample. It is known that for an analytic semigroup
immediate compactness and eventual compactness are equivalent. However, the
square root of $D_\lambda$ generates a semigroup whose eigenvalues $e^{-t \log
n} = n^{-t}$ are $p$-summable if and only if $t > 1/p$. In particular, this self-adjoint semigroup
is eventually Gibbs, but not immediately Gibbs.

\item
It is known that for a bounded domain $\Omega\subset {\mathbb R}^d$ with the
cone property the embedding of $H^k(\Omega)$ into $L^2(\Omega)$ is a
Hilbert--Schmidt operator whenever $2k > d$, see~\cite{Mau61}, and in fact a
$p$-Schatten class operator if $pk>d$, see~\cite{Gra68}. Under certain
assumptions on the geometry, Maurin's and Gramsch's result have been extended
to unbounded domains~\cite{Cla66,Koe77}. In such situations, if $A$ generates
an analytic semigroup and $D(A) \subset H^1(\Omega)$, then $A$ generates a
Gibbs semigroup. Observe that by~\cite[Thm.~6.54 and Rem.~6.55]{Ada75} there
exist domains with infinite measure such that the embedding of $H^1(\Omega)$
into $L^2(\Omega)$ is in $\mathcal{L}_p(H^1(\Omega),L^2(\Omega))$ for some $p
\in [1,\infty)$. In this situation criterion (2) does not apply.
\item
The preceding criterion can also be useful for semigroups on
Sobolev spaces $H^s$ with index $s\not=0$.
For example, it allows us to prove that the semigroup
generated by the Wentzell--Robin Laplacian on a smooth domain (see Section~\ref{sec:wro} for
details) on $H^1(\Omega)$ considered in~\cite[\S 2.9]{AreMetPal03}
and~\cite{FavGolGol03} is Gibbs. To be more precise, recall that the domain of
the Wentzell--Robin-Laplacian is a subspace of $H^\frac{3}{2}(\Omega)\times
L^2(\partial \Omega)$. Thus, the semigroup generated by its part in
$V \coloneqq \{ (u, u_{|\partial\Omega}) : u \in H^1(\Omega) \}$ maps $V$ to $\{ (u,u_{|\partial\Omega}) : u \in H^\frac{3}{2}(\Omega) \}$
for all $t > 0$.
By~\cite[Satz 1]{Gra68} the embedding $H^\frac{3}{2}(\Omega)\hookrightarrow
H^1(\Omega)$ is a $p$-Schatten class operator for all $p>2d$, hence so is any
operator of the semigroup for $t>0$, and by part (1) this
semigroup is Gibbs.
The same argument applies to general (also non-selfadjoint) elliptic operators with Wentzell-Robin or
similar boundary conditions.
On the other hand, part (2) does not yield the result in this case since the semigroup is not defined on an $L^2$-space.
\end{enumerate}
\end{rems}

We apply known result about convergence in Schatten norms, cf.~\cite[Chapter 2]{Sim05},
to semigroups arising from $j$-elliptic forms.
The following proposition is a direct consequence of Proposition~\ref{prop:dom} together with~\cite[Lemma, p.271]{Zag80}.
Its conditions are often easy to check; we will give some examples later on.

\begin{theo}\label{thm:semiconv}
Let $H$ be a Hilbert space and let $(a_n,j_n)$, $(a,j)$ and $(b,j)$ be symmetric, sesquilinear
forms that satisfy the conditions in Definition~\ref{def:complete}.
We denote by $A_n$, $A$ and $B$ the associated self-adjoint operators.
Assume that
\begin{enumerate}[(i)]
\item $(b,j) \le (a_n,j_n)$ for all $n \in \mathbb{N}$ in the sense of Proposition~\ref{prop:dom},
\item $-B$ generates a Gibbs semigroup, and
\item $(A_n)$ converges to $A$ in the strong resolvent sense.
\end{enumerate}
Then 
$$\lim_{n\to \infty}e^{-tA_n}= e^{-tA}\qquad \hbox{in }{\mathcal L}_1(H)$$
for every $t>0$.
\end{theo}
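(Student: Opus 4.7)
The plan is to combine the semigroup ordering furnished by Proposition~\ref{prop:dom} with a trace-norm convergence criterion of Zagrebnov. The argument splits naturally into three steps: lifting form domination to semigroup domination, obtaining strong convergence of the semigroups, and concluding via the Schatten-class domination principle.

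First I would apply hypothesis (i) and Proposition~\ref{prop:dom} to each $n$, obtaining $(\gamma+A_n)^{-1}\le(\gamma+B)^{-1}$ in the positive operator sense for every sufficiently large $\gamma\in\mathbb R$. Since all operators involved are self-adjoint and bounded below, this resolvent ordering is equivalent to the semigroup ordering
\[
        0\le e^{-tA_n}\le e^{-tB}\qquad (t>0,\; n\in\mathbb N),
\]
as can be seen either via the Hille--Yosida approximation $e^{-tA}=\lim_k(I+\tfrac{t}{k}A)^{-k}$ (products of commuting positive operators being positive), via the Laplace-transform representation of $e^{-tA}$ in terms of the resolvents, or directly from \cite[Thm.~VI.2.21]{Kat95} which also underlies Proposition~\ref{prop:dom}. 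By hypothesis (ii) the dominating operator $e^{-tB}$ lies in $\mathcal L_1(H)$.

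Next I would invoke hypothesis (iii): strong resolvent convergence of self-adjoint operators implies $f(A_n)\to f(A)$ strongly for every bounded continuous $f$ by the functional calculus, in particular $e^{-tA_n}\to e^{-tA}$ strongly for every $t>0$. Taking the strong limit in the inequality above also yields $0\le e^{-tA}\le e^{-tB}$, so after this step I have a sequence of positive self-adjoint operators $T_n\coloneqq e^{-tA_n}$ converging strongly to $T\coloneqq e^{-tA}$ and dominated uniformly in $n$ by the trace class operator $K\coloneqq e^{-tB}$.

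The final step is to quote Zagrebnov's lemma~\cite[Lemma, p.271]{Zag80}: if $0\le T_n\le K\in\mathcal L_1(H)$ and $T_n\to T$ strongly, then $T_n\to T$ in trace norm. Applied to the data above, this yields the desired conclusion for each $t>0$. The only delicate point is the passage from resolvent ordering to semigroup ordering in the first step; everything else is a direct combination of cited results, which is exactly why the theorem can be advertised as a \emph{direct consequence} of Proposition~\ref{prop:dom} and Zagrebnov's lemma.
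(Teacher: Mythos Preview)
Your argument contains a genuine gap at exactly the point you flag as ``delicate'': the passage from resolvent ordering to semigroup ordering. The implication
\[
(\gamma+A_n)^{-1}\le(\gamma+B)^{-1}\ \Longrightarrow\ e^{-tA_n}\le e^{-tB}
\]
is \emph{false} in general for non-commuting self-adjoint operators. The paper itself stresses this in the remark immediately following the theorem, giving the explicit $2\times 2$ counterexample $A=\left(\begin{smallmatrix}2&2\\2&2\end{smallmatrix}\right)$, $B=\left(\begin{smallmatrix}3&0\\0&6\end{smallmatrix}\right)$, for which $0\le A\le B$ but $e^{-B}\not\le e^{-A}$. Your Hille--Yosida justification breaks down because $0\le S\le T$ does \emph{not} imply $S^k\le T^k$ for $k\ge 2$ (this is precisely where operator monotonicity of $t\mapsto t^r$ fails for $r>1$); thus from $(I+\tfrac{t}{k}A_n)^{-1}\le(I+\tfrac{t}{k}B)^{-1}$ you cannot pass to the $k$-th powers. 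The Laplace-transform and Kato references you invoke establish the equivalence between form ordering and resolvent ordering, not between either of these and semigroup ordering.

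The paper's proof does not go through semigroup domination at all. It quotes Zagrebnov's lemma \cite[Lemma, p.~271]{Zag80} directly: that lemma is formulated for semigroups generated by operators comparable in the \emph{form} (equivalently, resolvent) sense and deduces trace-norm convergence without ever asserting $e^{-tA_n}\le e^{-tB}$. The mechanism behind Zagrebnov's argument is different---roughly, form domination $A_n\ge B$ gives the eigenvalue comparison $\lambda_k(A_n)\ge\lambda_k(B)$ via min--max, whence the traces $\tr e^{-tA_n}=\sum_k e^{-t\lambda_k(A_n)}$ are controlled uniformly by $\tr e^{-tB}$, and this together with strong convergence suffices. So the theorem really is a direct concatenation of Proposition~\ref{prop:dom} and Zagrebnov's lemma; your attempt to interpolate an operator-ordering step between them is both unnecessary and incorrect.
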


\begin{rem}
Theorem~\ref{thm:semiconv} tells us that the existence of a dominating form
implies trace norm convergence of the semigroup. This is
remarkable because, even though form domination implies domination for the
resolvents, it does in general not imply domination for the
semigroups. In fact, for $A \coloneqq
\left(\begin{smallmatrix} 2 & 2 \\ 2 & 2 \end{smallmatrix}\right)$ and $B
\coloneqq \left(\begin{smallmatrix} 3 & 0 \\ 0 & 6 \end{smallmatrix}\right)$ we
have $0 \le A \le B$, but $e^{-B} \not\le e^{-A}$ in the sense of positive definiteness.
The authors are grateful to Ulrich Groh (T\"ubingen) for pointing out this example.
\end{rem}

Finally, we also consider convergence of semigroups in Schatten norms as operators
between different Hilbert spaces. We obtain our main result as a consequence of an interpolation
theorem for Schatten class operators. A criterion which enables us to check the trace norm
convergence required in the following theorem was given in Theorem~\ref{thm:semiconv}. 

\begin{theo}\label{thm:sobschatt}
Let $p\in [1,\infty)$. Let $(A_n),A$ be uniformly m-sectorial operators on $H$,
i.e., m-sectorial operators with uniform constants, which generate Gibbs semigroups. 
Assume that there exists a subspace $\tilde{H}$ of $H$ such that
\begin{itemize}
\item $\tilde{H}$ is a Hilbert space,
\item $\tilde{H}$ is compactly embedded in $H$, and
\item there exists some $k \in \mathbb{N}$ such that $D(A_n^k) \subset \tilde{H}$ for all $n \in \mathbb{N}$ with uniform embedding constants.
\end{itemize}
If
$$\lim_{n\to \infty}e^{-tA_n}= e^{-tA}\qquad \hbox{in }\mathcal{L}_p(H)$$ 
for every $t>0$, then 
$$\lim_{n\to \infty}e^{-tA_n} = e^{-tA}\qquad \hbox{in }\mathcal{L}_q(H,H_\theta)$$ 
for every $t>0$ and every $\theta\in (0,1)$, where $q \in [1,\infty)$ is given by
\[
	\frac{1}{q} = \frac{\theta}{p} + (1-\theta)
\]
and where $H_\theta$ denotes the complex interpolation space
$[\tilde{H},H]_\theta$.
\end{theo}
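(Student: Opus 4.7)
The plan is to reduce the theorem to an interpolation estimate of the form
\[
\|T\|_{\mathcal{L}_q(H, H_\theta)} \le C\,\|T\|_{\mathcal{L}_1(H, \tilde H)}^{1-\theta}\,\|T\|_{\mathcal{L}_p(H, H)}^{\theta},
\]
which is the three-lines incarnation of the complex interpolation identity $\bigl[\mathcal{L}_1(H, \tilde H), \mathcal{L}_p(H, H)\bigr]_\theta = \mathcal{L}_q(H, H_\theta)$ for Schatten classes of operators from a fixed Hilbert space into a Hilbert-space couple; see~\cite[Ch.~2]{Sim05}. Applied to $T_n \coloneqq e^{-tA_n} - e^{-tA}$ the rightmost factor already tends to zero by hypothesis, so the real task is to produce a uniform bound on $\|e^{-tA_n}\|_{\mathcal{L}_1(H, \tilde H)}$; combined with the triangle inequality, this controls the remaining interpolation factor.

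The uniform trace-class bound is established in two stages. First I record a uniform operator-norm bound $\|e^{-sA_n}\|_{H \to \tilde H} \le M(s)$ valid for every $s > 0$: uniform m-sectoriality delivers analytic semigroup estimates $\|A_n^k e^{-sA_n}\|_{\mathcal{B}(H)} \le C_k s^{-k}$ with $C_k$ independent of $n$, and the uniform embedding $D(A_n^k) \hookrightarrow \tilde H$ converts this into the desired bound into $\tilde H$. Second, for a fixed $t > 0$ I choose an integer $N$ with $N - 1 \ge p$ and set $s \coloneqq t/N$; the semigroup factorisation
\[
e^{-tA_n} = e^{-sA_n} \circ \underbrace{e^{-sA_n} \circ \cdots \circ e^{-sA_n}}_{N-1 \text{ factors}}
\]
is then read with the leftmost factor as an operator $H \to \tilde H$ and the remaining $N-1$ factors as operators on $H$. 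The hypothesis yields a uniform $\mathcal{L}_p(H)$-bound for each of the latter factors, so H\"older's inequality for Schatten classes places their $(N-1)$-fold composition uniformly in $\mathcal{L}_{p/(N-1)}(H)$. Composing on the left with the bounded $H \to \tilde H$ operator from the first stage and invoking the ideal property of Schatten classes produces a uniform bound for $e^{-tA_n}$ in $\mathcal{L}_{p/(N-1)}(H, \tilde H)$; since $p/(N-1) \le 1$, the continuous inclusion $\mathcal{L}_{p/(N-1)} \subset \mathcal{L}_1$ finally delivers the sought bound in $\mathcal{L}_1(H, \tilde H)$.

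Feeding both ingredients into the interpolation inequality gives $\|T_n\|_{\mathcal{L}_q(H, H_\theta)} \to 0$ as $n \to \infty$, which is the claim. The main obstacle is the second stage: the $\mathcal{L}_p(H)$-hypothesis on its own carries no information about the regularity codomain $\tilde H$, so one must simultaneously exploit the semigroup structure, the smoothing estimates that accompany uniform m-sectoriality, and the Schatten ideal property. The trick of splitting time into $N$ equal pieces, devoting one \emph{smoothing} factor to map into $\tilde H$ and the remaining $N-1$ \emph{Schatten} factors to stay in $H$, is the crucial combinatorial ingredient that turns the hypothesis into a bound in the right class.
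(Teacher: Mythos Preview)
Your approach is essentially the paper's: both rely on the Schatten interpolation inequality and both obtain the uniform $\mathcal{L}_1(H,\tilde H)$-bound by factorising the semigroup into one smoothing piece landing in $\tilde H$ (via analytic estimates and the uniform embedding $D(A_n^k)\hookrightarrow\tilde H$) and enough $\mathcal{L}_p(H)$-pieces to reach trace class by H\"older. The paper writes $e^{-tA_n}=e^{-\frac{t}{2}A_n}e^{-\frac{t}{2}A_n}$ and defers the H\"older step to Remark~\ref{versch}(1), whereas you spell out the $N$-fold splitting explicitly; this is purely cosmetic.

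There is one genuine omission, though. To invoke the triangle inequality in $\mathcal{L}_1(H,\tilde H)$ for $T_n=e^{-tA_n}-e^{-tA}$ you need $e^{-tA}\in\mathcal{L}_1(H,\tilde H)$, but the hypotheses assert $D(A_n^k)\subset\tilde H$ only for the approximants, not for the limit $A$. Your two-stage argument therefore does not apply to $e^{-tA}$ as written. The paper closes this gap in its first paragraph: it shows $D(A^k)\subset\tilde H$ by observing that $((\lambda+A_n)^{-k}u)_n$ is bounded in $\tilde H$, extracting a weak $\tilde H$-limit, and identifying it with $(\lambda+A)^{-k}u$ via strong resolvent convergence; the closed graph theorem then gives a continuous embedding, after which your smoothing/Schatten factorisation applies verbatim to $A$. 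An alternative fix, avoiding any statement about $A$, is to apply the interpolation inequality to $e^{-tA_n}-e^{-tA_m}$ instead, conclude that $(e^{-tA_n})_n$ is Cauchy in $\mathcal{L}_q(H,H_\theta)$, and identify the limit as $e^{-tA}$ from the assumed $\mathcal{L}_p(H)$-convergence.
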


\begin{proof}
We first show that $D(A^k)$ is continuously embedded into $\tilde{H}$.
Fix $\lambda > 0$ so large that $\lambda + A_n$ is invertible with uniformly bounded inverse
with respect to $n \in \mathbb{N}$. Take $u\in H$. Then the uniform constants in the m-sectoriality and the embeddings ensure that
the sequence $((\lambda+A_n)^{-k} u)_{n\in\mathbb N}$ is bounded in $\tilde{H}$. Hence there exists a weakly convergent subsequence in
$\tilde{H}$, which necessarily converges to $(\lambda+A)^{-k} u$ since the semigroups and hence the resolvents
converge strongly by assumption. This proves that
$D(A^k)\subset \tilde{H}$. Now the closed graph theorem yields $D(A^k)\hookrightarrow \tilde{H}$.

For every $t>0$ and every $n\in\mathbb N$ the operator $e^{-tA_n}=e^{-\frac{t}{2} A_n}e^{-\frac{t}{2} A_n}$
is a composition of an operator in $\mathcal{L}_1(H)$
and an operator in $\mathcal{L}(H,\tilde{H})$, both with uniformly estimable norms, compare (1) in Remarks~\ref{versch}.
Hence $\sup_{n \in \mathbb{N}} \|e^{-tA_n}\|_{\mathcal{L}_1(H,\tilde{H})} < \infty$ by the ideal property of the norm, and by a similar argument
$e^{-tA}$ is in $\mathcal{L}_1(H,\tilde{H})$ as well.

Now we obtain from an interpolation result for Schatten class operators~\cite{Gap74} that
\[
\|e^{-tA_n}- e^{-tA}\|_{{\mathcal L}_q(H,H_\theta)}\le C\|e^{-tA_n}- e^{-tA}\|^\theta_{{\mathcal L}_1(H,\tilde{H})}\|e^{-tA_n}- e^{-tA}\|^{1-\theta}_{{\mathcal L}_p(H)},
\]
for some constant $C \ge 1$ since the fractional domain space considered in~\cite{Gap74}
coincides with $H_\theta$ up to equivalent norms.
The first factor is bounded by the above considerations whereas the second factor converges to zero by assumption.
\end{proof}

Let us combine several of our observations into a final result.
\begin{cor}\label{cor:summarize}
	Let $X$ be a finite measure space.
	Let $(a_n,j_n)$, $(a,j)$ and $(b,j')$ be positive elltiptic forms on $L^2(X)$ in the sense of Definition~\ref{def:complete}
	with form domain $V$, and denote the associated self-adjoint operators by $A_n$, $A$ and $B$, respectively.
	Let $\tilde{H}$ be a dense subspace of $H$, which is a Hilbert space in its own right.
	Assume that
	\begin{itemize}
	\item
		$(a_n,j_n)$ converges to $(a,j)$ in the sense of Mosco;
	\item
		$(b,j') \le (a_n,j_n)$ for all $n \in \mathbb{N}$;
	\item
		there exists $q > 2$ such that $j'(V) \subset L^q(X)$;
	\item
	for all $u\in V$ there exists $w\in V$ such that
		$(|j(u)| \wedge 1)\sgn j(u)=j(w) $ and $\Real b(w, u-w) \ge 0$;
	\item
		$\tilde{H}$ is compactly embedded into $H$;
	\item
		$D(A_n^k) \subset \tilde{H}$ for some $k \in \mathbb{N}$ with an embedding
		constant that is uniform in $n \in \mathbb{N}$;
	\end{itemize}
	For arbitrary $\theta \in (0,1)$ let $H_\theta$ denote the complex interpolation space $H_\theta = [\tilde{H},H]_\theta$.
	Then $e^{-tA_n} \to e^{-tA}$ in the trace norm $\mathcal{L}_1(H,H_\theta)$ and hence in particular
	in the operator norm $\mathcal{L}(H,H_\theta)$.
\end{cor}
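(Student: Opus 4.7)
The plan is to chain three results already proved in the paper: Theorem~\ref{thm:strongconv} (Mosco convergence $\Rightarrow$ strong resolvent convergence), Theorem~\ref{thm:semiconv} (strong resolvent convergence together with a dominating Gibbs form $\Rightarrow$ trace norm convergence in $\mathcal{L}_1(H)$), and Theorem~\ref{thm:sobschatt} applied with $p = 1$. The latter choice is forced, because the exponent in Theorem~\ref{thm:sobschatt} is $1/q = \theta/p + (1-\theta)$, which for $p=1$ gives $q = 1$ and lands us exactly in the target space $\mathcal{L}_1(H,H_\theta)$.

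The non-trivial preliminary step is to verify the Gibbs property of $-B$, which is the only hypothesis of Theorem~\ref{thm:semiconv} that is not directly assumed in the corollary. The fourth bullet in the corollary is the familiar Beurling--Deny/Ouhabaz invariance criterion for the closed unit ball of $L^2(X)$: it ensures that the projection $f \mapsto (|f|\wedge 1)\sgn f$ is contractive for the semigroup $(e^{-tB})_{t\ge 0}$, and hence that this semigroup extends to a contraction on $L^\infty(X)$. Combining this with the embedding $j'(V)\subset L^q(X)$ for some $q>2$ (writing $q = 2d/(d-2)$ for suitable $d>2$), Proposition~\ref{prop:ultra} gives that $(e^{-tB})_{t\ge 0}$ is ultra-contractive on $L^2(X)$. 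Since $X$ has finite measure, part (2) of Remarks~\ref{versch} then upgrades ultra-contractivity to the Gibbs property.

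With this in hand, Theorem~\ref{thm:strongconv} turns the Mosco convergence hypothesis into strong resolvent convergence $A_n\to A$; the domination $(b,j')\le(a_n,j_n)$ is assumed; and Theorem~\ref{thm:semiconv} delivers $e^{-tA_n}\to e^{-tA}$ in $\mathcal{L}_1(H)$ for every $t>0$, together with the Gibbs property of each $A_n$ and of $A$ as a byproduct. For the final step, I would check the remaining hypotheses of Theorem~\ref{thm:sobschatt}: uniform m-sectoriality of $(A_n)$ and $A$ is automatic because all $a_n$ and $a$ are positive, so the operators are positive self-adjoint and thus trivially uniformly sectorial in the right half-plane; the compact embedding $\tilde{H}\hookrightarrow H$ and the uniform embedding $D(A_n^k)\hookrightarrow \tilde{H}$ are assumed. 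Applying Theorem~\ref{thm:sobschatt} with $p=1$ then yields convergence in $\mathcal{L}_1(H,H_\theta)$ for every $\theta\in(0,1)$, and the ideal estimate $\|\cdot\|_{\mathcal{L}(H,H_\theta)} \le \|\cdot\|_{\mathcal{L}_1(H,H_\theta)}$ gives operator norm convergence as an immediate corollary.

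The main obstacle is the verification of the Gibbs property of $-B$: one has to recognise the somewhat opaque fourth bullet as a form-theoretic invariance criterion for the $L^\infty$-unit ball and then traverse the chain \emph{invariance $\Rightarrow L^\infty$-contractivity $\Rightarrow$ ultra-contractivity $\Rightarrow$ Gibbs} using Proposition~\ref{prop:ultra} and Remarks~\ref{versch}(2). Once this is done, the rest is bookkeeping that consists of orchestrating Theorems~\ref{thm:strongconv}, \ref{thm:semiconv} and~\ref{thm:sobschatt} in the right order.
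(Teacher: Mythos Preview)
Your proposal is correct and follows essentially the same route as the paper's proof: recognise the fourth bullet as the Ouhabaz-type invariance criterion (the paper cites \cite[Prop.~2.9]{AreEls09}) giving $L^\infty$-contractivity of $(e^{-tB})_{t\ge 0}$, feed this together with the third bullet into Proposition~\ref{prop:ultra} and Remarks~\ref{versch}(2) to obtain the Gibbs property, then chain Theorems~\ref{thm:strongconv}, \ref{thm:semiconv} and~\ref{thm:sobschatt} (with $p=1$) exactly as you describe. Your additional remarks on uniform m-sectoriality from positivity and on the Gibbs property of the $A_n$ are correct and fill in details the paper leaves implicit.
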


\begin{proof}
By the invariance criterion for $j$-elliptic forms~\cite[Prop~2.9]{AreEls09} the semigroup is $L^\infty(X)$-contractive, analogously to the situation in~\cite[Thm.~2.13]{Ouh05}. Hence by Proposition~\ref{prop:ultra}
	it is ultra-contractive and thus Gibbs by Remark~\ref{versch}. Since in addition $(-A_n)$ converges to $-A$
	in the strong resolvent sense by Theorem~\ref{thm:strongconv} we obtain from Theorem~\ref{thm:semiconv}
	that $e^{-tA_n} \to e^{-tA}$ in $\mathcal{L}_1(H)$. Now the assertion follows from Theorem~\ref{thm:sobschatt}.
\end{proof}

\begin{rem}
	The assumption of Corollary~\ref{cor:summarize} that $D(A_n^k) \subset \tilde{H}$ with uniform embeddings
	is in particular satisfied for $\tilde{H} = V$ if the constants in the ellipticity estimate~\eqref{eq:elliptic} of $(a_n,j_n)$ are
	uniform in $n$, for the semigroups $(e^{tA_n})$ are bounded as operators from $H$ to $V$, uniformly in $n$. 
	
	We emphasise that in this special case Corollary~\ref{cor:summarize} yields a convergence result for semigroups under assumptions solely on the associated forms, with no reference to the associated operators.
\end{rem}

\begin{rems}
Let us finally remark on the Gibbs property for other kinds of operator families.
\begin{enumerate}[(1)]
\item Let $-A$ be a self-adjoint operator, hence the generator of a sine
operator function $(S(t))_{t \in \mathbb{R}}$, cf.~\cite[\S~3.15]{AreBatHie01}. It is known  that $S(t)$
maps $H$ into $V$ for all $t\in\mathbb R$, where $V$ is the domain of the form
associated with $A$. If the embedding of $V$ into $H$ is of $p$-Schatten class
(e.g., $V$ a closed subspace of $H^1(0,1)$, $H = L^2(0,1)$ and $p > 1$, cf.~Remark~\ref{versch}.(5)),
then $S(t)$ is of $p$-Schatten class for all $t\in\mathbb R$.
\item
Unlike in the semigroup case, however, there exist sine operator functions $(S(t))_{t \in \mathbb{R}}$ on
a Hilbert space $H$ such that $S(t) \in \mathcal{L}_p(H)$ for all $t \in \mathbb{R}$ for some $p > 1$,
but $S(t) \not\in \mathcal{L}_{p-\eps}(H)$ for all $t \in \mathbb{R}$ and all $\eps > 0$.
In fact, fix $\alpha\ge 1$ and consider the multiplication
operator $M_\lambda$ on $\ell^2$, where the sequence $\lambda$ is
given by $$\lambda_n:=-\left(\frac{\pi}{2}+2\pi \lfloor n^\alpha\rfloor
\right)^2,\qquad n\in\mathbb N$$ and $\lfloor x\rfloor$ denotes the greatest
integer below $x$. Then the corresponding sine operator function is given by
$$S(t)x:=\bigl(\tfrac{\sinh(\sqrt{\lambda_n} t)}{\sqrt{\lambda_n}}x_n\bigr)_{n\in\mathbb N}
	=\bigl(\tfrac{\sin((\frac{\pi}{2} + 2\pi\lfloor n^\alpha \rfloor)t)}{\frac{\pi}{2} + 2\pi \lfloor n^\alpha \rfloor} x_n\bigr)_{n \in \mathbb{N}},\quad t\in\mathbb R,\;x\in \ell^2,$$
so $S(t) \in \mathcal{L}_p(\ell^2)$ 
for all $p>\alpha^{-1}$ and all $t\in\mathbb R$, but $S(1)$ is not in
${\mathcal L}_{\alpha^{-1}}(\ell^2)$.

\item On an infinite dimensional Hilbert space a cosine operator function with unbounded generator never
consists of Schatten class operators on a non-void open interval. In fact, a
cosine operator function can only be compact on an interval of positive length
if its generator is a bounded operator, cf.~\cite[Lemma~2.1]{TraWeb77}.
\end{enumerate}

\end{rems}

\section{Applications}\label{application:sec}

\subsection{Convergence of Laplacians with respect to higher regularity Schatten norms}
We begin with an application of our result about Schatten convergence, which shows
how our convergence results can be combined to treat semigroups generated by elliptic operators:
starting with convergence in the strong resolvent sense we are able
to obtain trace norm convergence with respect to Sobolev spaces of
arbitrarily high order.

\begin{theo}
Let $\Omega$ be a bounded open
domain in $\mathbb{R}^d$ with $C^\infty$-boundary. Consider a sequence of Laplacians
$\Delta_{k_n}$ with Robin boundary conditions
$$\frac{\partial u}{\partial \nu}+k_n u=0\qquad \hbox{on }\partial \Omega.$$
for constants $(k_n)_{n\in\mathbb N}\subset [0,\infty)$. If
$(k_n)$ is a monotonically decreasing null sequence, then
$$\lim_{n\to \infty}e^{t \Delta_{k_n}} = e^{t \Delta_N}\qquad \hbox{in }\mathcal{L}_1(L^2(\Omega),H^\ell(\Omega))$$
for every $t > 0$ and every $\ell \in \mathbb{N}$, where $\Delta_N$ denotes the
Laplace operator on $\Omega$ with Neumann boundary conditions.
\end{theo}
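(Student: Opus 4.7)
The plan is to stack the three main tools of the paper: apply Theorem~\ref{thm:strongconv} for strong resolvent convergence, Theorem~\ref{thm:semiconv} to upgrade this to $\mathcal{L}_1(L^2(\Omega))$-convergence, and finally Theorem~\ref{thm:sobschatt} to lift the convergence into $\mathcal{L}_1(L^2(\Omega),H^\ell(\Omega))$. Set $H:=L^2(\Omega)$, $V:=H^1(\Omega)$, and $j\colon V\to H$ the inclusion, which is compact by Rellich--Kondrachov. The operator $-\Delta_{k_n}$ is associated with the positive $j$-elliptic form
\[
    a_n(u,v):=\int_\Omega \nabla u\cdot\overline{\nabla v}\,dx+k_n\int_{\partial\Omega}u\,\overline v\,d\sigma,
\]
and $-\Delta_N$ with $a(u,v):=\int_\Omega \nabla u\cdot\overline{\nabla v}\,dx$.

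To verify Mosco convergence, condition (ii) of Theorem~\ref{thm:strongconv} is immediate via the constant sequence $u_n\equiv u$, since $a_n(u,u)=a(u,u)+k_n\|u\|_{L^2(\partial\Omega)}^2\to a(u,u)$. For (i), if $j(u_n)\rightharpoonup x$ in $H$ with $\liminf a_n(u_n,u_n)<\infty$, nonnegativity of the boundary term together with $a_n\ge\|\nabla\cdot\|^2_{L^2}$ forces $(u_n)$ to be bounded in $V$; along a subsequence $u_n\rightharpoonup u$ in $V$, hence $j(u)=x$, and weak lower semicontinuity of the Dirichlet integral yields $a(u,u)\le\liminf a_n(u_n,u_n)$. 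Next, take $b:=a$ and $j':=j$; then $(b,j')\le(a_n,j)$ in the sense of Proposition~\ref{prop:dom} because $k_n\ge 0$. The Neumann Laplacian $-\Delta_N$ generates a Gibbs semigroup: by elliptic regularity $D(-\Delta_N)\subset H^2(\Omega)$, and by part~(5) of Remarks~\ref{versch} the embedding $H^2(\Omega)\hookrightarrow L^2(\Omega)$ is of Schatten class, so part~(3) of Remarks~\ref{versch} applies. Theorem~\ref{thm:semiconv} then gives $e^{t\Delta_{k_n}}\to e^{t\Delta_N}$ in $\mathcal{L}_1(L^2(\Omega))$ for every $t>0$.

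For the Sobolev upgrade, fix $\ell\in\mathbb{N}$, choose $m\in\mathbb{N}$ with $2m>\ell$, and set $\tilde H:=H^{2m}(\Omega)$, which embeds compactly in $H$. Iterated elliptic regularity for the Robin problem on the smooth domain $\Omega$ yields $D((-\Delta_{k_n})^m)\subset H^{2m}(\Omega)$; since $(k_n)$ stays in the compact interval $[0,k_1]$, the embedding constants can be chosen independent of $n$. Applying Theorem~\ref{thm:sobschatt} with $p=1$ (so $q=1$) gives convergence in $\mathcal{L}_1(L^2(\Omega),[H^{2m}(\Omega),L^2(\Omega)]_\theta)$ for every $\theta\in(0,1)$. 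The complex interpolation identity $[H^{2m}(\Omega),L^2(\Omega)]_\theta=H^{2m(1-\theta)}(\Omega)$, valid on smooth bounded domains, then yields the claim by picking $\theta:=1-\ell/(2m)$.

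The one nontrivial input, and the main obstacle, is the uniform-in-$n$ elliptic regularity estimate
\[
    \|u\|_{H^{2m}(\Omega)}\le C\bigl(\|u\|_{L^2(\Omega)}+\|\Delta^m u\|_{L^2(\Omega)}\bigr)
\]
on $D((-\Delta_{k_n})^m)$. This requires checking that the constants in the standard Agmon--Douglis--Nirenberg estimates for the Robin boundary value problem depend continuously on the boundary parameter, so that they remain bounded as $k_n$ varies over the compact set $[0,k_1]$. Every other step in the argument is a direct invocation of a theorem or remark already established earlier in the paper.
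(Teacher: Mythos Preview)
Your proof is correct and follows essentially the same three-step scheme as the paper: strong resolvent convergence, upgrade to $\mathcal{L}_1(L^2)$ via domination by the Neumann form and the Gibbs property, then lift to $\mathcal{L}_1(L^2,H^\ell)$ via Theorem~\ref{thm:sobschatt} and uniform elliptic regularity. The only cosmetic differences are that the paper cites Kato's monotone form convergence theorem directly for step one (your Mosco verification is self-contained and in fact does not use the monotonicity of $(k_n)$), establishes the Gibbs property of the Neumann semigroup via ultra-contractivity rather than the Schatten embedding of Remarks~\ref{versch}.(3)--(5), and takes $\tilde H=H^{2\ell}$ with $\theta=\tfrac12$ instead of your $\tilde H=H^{2m}$ with $\theta=1-\ell/(2m)$.
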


\begin{proof}
By~\cite[Thm. 8.3.11]{Kat95} the sequence $(\Delta_{k_n})$ converges to $\Delta_N$
in the strong resolvent sense. Let $(a_n)$ and $a_N$ be the elliptic classical forms associated with
$-\Delta_{k_n}$ and $-\Delta_N$, respectively. Then $a_n \ge a_N$ in the sense of Proposition~\ref{prop:dom}.
Moreover, $\Delta_N$ generates a Gibbs semigroup, see (2) in Remarks~\ref{versch} and use~\cite[Corollary~2.17 and Theorem~6.4]{Ouh05}.
Hence $e^{-t\Delta_{k_n}} \to e^{-t\Delta_N}$ in $\mathcal{L}_1(L^2(\Omega))$
by Theorem~\ref{thm:semiconv}. Moreover, following the proofs of elliptic regularity, cf.~\cite[\S 2.5.1]{Gri85},
one can see that $D(\Delta_{k_n}^\ell)$ is uniformly embedded into $H^{2\ell}(\Omega)$ for every $\ell \in \mathbb{N}$.
Applying Theorem~\ref{thm:sobschatt} with $\theta = \frac12$
we conclude that $e^{t \Delta_{k_n}} \to e^{t \Delta_N}$ in $\mathcal{L}_1(L^2(\Omega),H^\ell(\Omega))$
for every $t > 0$ and every $\ell \in \mathbb{N}$.
\end{proof}

\begin{rem}
Analogous arguments work for heat equations with the dynamic boundary conditions
$$\frac{\partial u}{\partial t}=-\frac{\partial u}{\partial \nu}-k_n u\qquad \hbox{on }\partial \Omega,$$
which arise from forms as seen in~\cite{AreMetPal03}.
This complements the results of~\cite{CocGolGol08}, where the emphasis
lies in obtaining sharp estimates for the rate of convergence with
respect to the $H^1$-operator norm.
\end{rem}

\subsection{Convergence of Laplacians with variable boundary conditions on exterior domains}\label{sec:birapp}

The result in this section is somewhat special, since we prove Schatten norm
convergence of diffusion semigroups $(T_n(t))_{t \ge 0}$  to a semigroup $(T(t))_{t \ge 0}$, all acting on spaces of functions on exterior domains with varying boundary conditions. As we will see, in this situation it is sometimes possible to
obtain that $T_n(t) - T(t) \to 0$ in $\mathcal{L}_p$ (for sufficiently large
values of $p$) as $n\to \infty$ even though the operators $T_n(t)$ and $T(t)$ are not individually in
$\mathcal{L}_p$ and in fact not even compact. In particular, Theorem~\ref{thm:semiconv} does not apply here. Instead, our argument relies upon classical results on differences of differential operators first due to Mikhail \v{S}.\ Birman~\cite[Thm.~3.8]{Bir08} and recently improved in~\cite{BehLanLob10}; only Theorem~\ref{thm:strongconv} is additionally needed. 
Such situations indeed appear frequently in mathematical physics, see for example~\cite{Sto94,DemStoSto95,CheExnTur10}.

\begin{theo}\label{thm:birm1}
Let $\Omega\subset{\mathbb R}^d$, $d \ge 3$, be an exterior domain with smooth
boundary and $\Delta_\beta$ the Laplace operator on $\Omega$ with Robin boundary condition
\[
	\frac{\partial u}{\partial \nu}+\beta u = 0\qquad \hbox{on }\partial \Omega.
\]
If $(\beta_n)_{n\in\mathbb N}$ is a bounded sequence in $L^\infty(\partial\Omega)$ and converges to a function $\beta_0$ almost everywhere, then 
$$\lim_{n\to \infty} \bigl( e^{t\Delta_{\beta_n}} - e^{t\Delta_{\beta_0}} \bigr) = 0 \qquad \hbox{in }{\mathcal L}_p(L^2(\Omega))$$ 
for every $t>0$ and all $p>\frac{d-1}{3}$.
\end{theo}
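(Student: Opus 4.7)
My strategy is to combine two ingredients. First, Theorem~\ref{thm:strongconv} (via Remark~\ref{rem:moscosym}, since $\beta_n$ may be negative) yields strong resolvent convergence of $-\Delta_{\beta_n}$ to $-\Delta_{\beta_0}$ and hence strong convergence of the semigroups. Second, the refined Birman-type Schatten estimate of Behrndt--Langer--Lotoreichik~\cite{BehLanLob10} controls the differences of the resolvents uniformly in $\mathcal{L}_p$ for $p > (d-1)/3$ through a Krein-type factorisation of $(\lambda - \Delta_{\beta_n})^{-1} - (\lambda - \Delta_{\beta_0})^{-1}$ on the boundary $\partial\Omega$. Combining these two inputs will yield Schatten convergence of the resolvent differences, which is then transferred to the semigroups by the Dunford--Taylor functional calculus. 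Note that since $\Omega$ is unbounded and $\beta_n$ need not be non-negative, the semigroups $e^{t\Delta_{\beta_n}}$ themselves are not in general compact, so Theorem~\ref{thm:semiconv} is inapplicable.

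For the Mosco step, the relevant classical forms are $a_\beta(u,v) \coloneqq \int_\Omega \nabla u \cdot \overline{\nabla v}\,dx + \int_{\partial\Omega} \beta\,(\gamma u)\overline{(\gamma v)}\,d\sigma$ on $V \coloneqq H^1(\Omega)$, with $j$ the embedding into $L^2(\Omega)$ and $\gamma$ the trace operator. The uniform $L^\infty$-bound on $(\beta_n)$ together with continuity of $\gamma$ gives uniform $j$-ellipticity after a common shift, so Remark~\ref{rem:moscosym} applies. Condition (ii) of Theorem~\ref{thm:strongconv} is verified with the constant recovery sequence $u_n = u$, for which $a_{\beta_n}(u,u) \to a_{\beta_0}(u,u)$ follows by dominated convergence on the compact manifold $\partial\Omega$. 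Condition (i) follows from weak lower semicontinuity of the Dirichlet integral, combined with $L^2(\partial\Omega)$-convergence of the traces of a weakly convergent sequence in $H^1(\Omega)$ (by compactness of $\gamma$) and another application of dominated convergence against $\beta_n$.

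For the Schatten step I invoke the Krein-type factorisation at the heart of~\cite{BehLanLob10}, which expresses the resolvent difference $D_n(\lambda) \coloneqq (\lambda - \Delta_{\beta_n})^{-1} - (\lambda - \Delta_{\beta_0})^{-1}$, for $\lambda$ far in the common resolvent set, as a composition of boundary Poisson-type maps $L^2(\Omega) \to L^2(\partial\Omega)$, the multiplication operator by $\beta_n - \beta_0$ on $L^2(\partial\Omega)$, and an adjoint Poisson map. Sharp singular-value estimates place these Poisson maps uniformly in $\mathcal{L}_q(L^2(\Omega), L^2(\partial\Omega))$ for every $q > 2(d-1)/3$, so the H\"older inequality for Schatten ideals bounds $\{D_n(\lambda)\}$ uniformly in $\mathcal{L}_p$ for every $p > (d-1)/3$. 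To promote this uniform bound to actual $\mathcal{L}_p$-convergence I approximate the Poisson maps by finite-rank operators in the Schatten norm; on the finite-dimensional range, multiplication by $\beta_n - \beta_0$ converges to zero in operator norm because $\beta_n \to \beta_0$ in every $L^r(\partial\Omega)$, $r < \infty$, by dominated convergence. Finally the Dunford--Taylor representation $e^{t\Delta_{\beta_n}} - e^{t\Delta_{\beta_0}} = \frac{1}{2\pi i}\int_\Gamma e^{-t\lambda} D_n(\lambda)\,d\lambda$ on a suitable contour around the spectrum transfers the Schatten convergence of the resolvent differences to the semigroups. The main obstacle is precisely this upgrade from uniform Schatten boundedness and strong convergence to genuine Schatten-norm convergence of the resolvent differences; the Krein factorisation is essential because it localises the dependence on $n$ to a multiplication operator on the compact boundary, where a.e.\ convergence is available.
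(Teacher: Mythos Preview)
Your proposal is correct, and the Mosco step is essentially identical to the paper's. The Schatten step, however, follows a genuinely different route.

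The paper never unpacks the Krein factorisation. Instead it argues by form domination: writing $m \coloneqq \sup_n \|\beta_n\|_\infty$, one has $-\Delta_{-m} \le -\Delta_{\beta_n},\, -\Delta_{\beta_0} \le -\Delta_m$ in the form sense, hence the self-adjoint resolvent differences satisfy the operator-modulus bound
\[
	\bigl|(\lambda-\Delta_{\beta_n})^{-1} - (\lambda-\Delta_{\beta_0})^{-1}\bigr|
		\le (\lambda-\Delta_m)^{-1} - (\lambda-\Delta_{-m})^{-1},
\]
and the right-hand side lies in $\mathcal{L}_p$ for $p>\tfrac{d-1}{3}$ by \cite[Cor.~3.6]{BehLanLob10}, used as a black box. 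The upgrade from strong convergence plus this fixed $\mathcal{L}_p$-majorant to $\mathcal{L}_p$-convergence is then a one-line citation of the dominated-convergence lemma \cite[Prop.~2.1]{NeiZag90b}. An algebraic resolvent identity extends this from a single $\lambda$ to the whole contour, after which the Dunford integral finishes the proof exactly as you do.

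Your approach, by contrast, opens up the Krein formula and makes transparent \emph{why} the differences are small: the factor $\beta_n-\beta_0$ lives on the compact boundary and tends to zero there. One caveat worth stating precisely: in the Krein formula for two Robin parameters the $n$-dependence is not confined to a single multiplication operator---one of the outer $\gamma$-factors, or an additional bounded factor such as $(\beta_n - M(\lambda))^{-1}$, also depends on $n$. This does not break your argument, since that factor is uniformly bounded and strongly convergent, so the finite-rank approximation trick still goes through; but the paper's domination argument sidesteps this bookkeeping entirely. What your route buys is self-containment (no need for the Neidhardt--Zagrebnov lemma) and a more explicit mechanism; what the paper's route buys is brevity and a soft reduction to a single fixed Schatten-class operator.
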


\begin{proof}
Let us first show that the operators are uniformly m-sectorial.
Since the trace operator $u \mapsto u|_{\partial\Omega}$ is compact from $H^1(\Omega)$ to $L^2(\partial\Omega)$,
by Lemma~\ref{lem:eberlein} there exists
$c > 0$ such that
\[
	\|u\|_{L^2(\partial\Omega)}^2 \le \frac{1}{2} \|\nabla u\|_{L^2(\Omega)}^2 + c \|u\|_{L^2(\Omega)}^2
\]
for all $u \in H^1(\Omega)$. This shows that the quadratic form $q_\beta$ associated with $-\Delta_\beta$, i.e.
\[
	q_\beta(u) \coloneqq \int_\Omega |\nabla u|^2 + \int_{\partial\Omega} \beta_n |u|^2
\]
for $u \in H^1(\Omega)$, is semi-bounded for every essentially bounded function $\beta$
and hence that $\Delta_\beta$ generates a $\mathrm{C}_0$-semigroup on $L^2(\Omega)$.
More precisely,
\[
	q_{\beta_n}(u) \ge \frac{1}{2} \int_\Omega |\nabla u|^2 - c m \|u\|_{L^2(\Omega)}^2
\]
with $m \coloneqq \sup_{n \in \mathbb{N}} \|\beta_n\|_\infty$. This proves uniform m-sectoriality.

Next we show convergence in the strong resolvent sense.
We write $\tilde{q}_\beta(u) \coloneqq q_\beta(u) + (c m + 1) \|u\|_{L^2(\Omega)}^2$ for simplicity
and show that $\tilde{q}_{\beta_n}$ converges to $\tilde{q}_{\beta_0} + c m + 1$ in the sense of Mosco.
To this end, let $(u_n)$ be a sequence in $H^1(\Omega)$ such that $u_n \rightharpoonup u$ in $L^2(\Omega)$
and $\liminf_{n \to \infty} \tilde{q}_{\beta_n}(u_n) < \infty$.
Then $(u_n)$ is bounded in $H^1(\Omega)$, hence passing to a subsequence we can assume that $u_n \rightharpoonup u$ in $H^1(\Omega)$.
Thus in particular $u \in H^1(\Omega)$.
Now by compactness $u_n|_{\partial\Omega} \to u|_{\partial\Omega}$ in
$L^2(\partial\Omega)$, so in particular
\[
	\int_{\partial\Omega} \beta_n |u_n|^2 \to \int_{\partial\Omega} \beta_0 |u|^2.
\]
By weak lower semicontinuity of the norm of $H^1(\Omega)$
this proves
\[
	\tilde{q}_{\beta_0}(u) \le \liminf_{n \to \infty} \tilde{q}_{\beta_n}(u_n).
\]
Moreover, for given $u \in H^1(\Omega)$ we clearly have $\tilde{q}_{\beta_n}(u) \to \tilde{q}_{\beta_0}(u)$ by Lebesgue's
theorem. We thus have shown that $\Delta_{\beta_n} \to \Delta_{\beta_0}$ in the strong resolvent sense, see Theorem~\ref{thm:strongconv}.

We now proves the convergence in Schatten norm.
Since $-\Delta_{-m} \le -\Delta_{\beta_n} \le -\Delta_m$
in the form sense we have
\[
	(c m + 1 - \Delta_m)^{-1} \ge (c m + 1 - \Delta_{\beta_n})^{-1} \ge (c m + 1 - \Delta_{-m})^{-1}
\]
as self-adjoint operators~\cite[Thm.~2.21]{Kat95}. A similar assertion holds for $\Delta_{\beta_0}$. Consequently,
\[
	|(c m + 1 - \Delta_{\beta_n})^{-1} - (c m + 1 - \Delta_{\beta_0})^{-1}|
		\le  (c m + 1 - \Delta_m)^{-1} - (c m + 1 - \Delta_{-m})^{-1},
\]
where the right hand side is in $\mathcal{L}_p(L^2(\Omega))$
for every $p > \frac{d-1}{3}$ by~\cite[Cor.~3.6]{BehLanLob10}.
This implies that $(c m + 1 - \Delta_{\beta_n})^{-1}$ converges to $(c m + 1 - \Delta_{\beta_0})^{-1}$
in $\mathcal{L}_p(L^2(\Omega))$ as $n \to \infty$, see~\cite[Prop.~2.1]{NeiZag90b}.

Moreover, as in the proof of~\cite[Thm.~3.5]{BehLanLob10}, for all $\lambda$ and $\mu$ in the sector $\Sigma \coloneqq \mathbb{C} \setminus \mathbb{R}_{\le cm}$ we have
\begin{align*}
	& (\mu - \Delta_{\beta_n})^{-1} - (\mu - \Delta_{\beta_0})^{-1} \\
		& = \bigl( I + (\lambda - \mu) (\mu - \Delta_{\beta_0})^{-1} \bigr) \bigl( (\lambda - \Delta_{\beta_n})^{-1} - (\lambda - \Delta_{\beta_0})^{-1} \bigr) \bigl( I + (\lambda - \mu) (\mu - \Delta_{\beta_n})^{-1} \bigr).
\end{align*}
In fact, this identity is certainly satisfied on $V=H^1(\Omega)$ since $\kappa - \Delta_{\beta_n}$ and
$\kappa - \Delta_{\beta_0}$ are isomorphisms from $V$ to the dual space $V'$.
Thus the identity extends to $L^2(\Omega)$ by denseness.

Picking $\lambda = cm+1$ we obtain from the ideal property that
\[
	(\mu - \Delta_{\beta_n})^{-1} - (\mu - \Delta_\beta)^{-1} \in \mathcal{L}_p(L^2(\Omega))
\]
for all $\mu \in \Sigma$. More precisely we even obtain that on every sector smaller than $\Sigma$ this sequence
of differences is bounded and convergent in $\mathcal{L}_p(L^2(\Omega))$ on compact subsets of $\Sigma$,
uniformly with respect to $n$.
Here we have used that the operators $\Delta_{\beta_n}$ are uniformly m-sectorial.
Hence the integral representation~\cite[(3.46)]{AreBatHie01}
\[
	e^{t\Delta_{\beta_n}}-e^{t\Delta_\beta} =
		\frac{1}{2\pi i}\int_\gamma e^{-t\lambda} \left((\lambda-\Delta_{\beta_n})^{-1} -(\lambda-\Delta_\beta)^{-1}\right) d\lambda,
\]
shows that $e^{t\Delta_{\beta_n}}$
converges to $e^{t\Delta_\beta}$ in $\mathcal{L}_p(L^2(\Omega))$ for every $t > 0$.
\end{proof}

\subsection{Coupled boundary conditions}
In this subsection we consider convergence for systems of Laplacians
with a certain coupled boundary conditions, which are motivated by quantum graphs.
It seems that~\cite[Thm.~VI.3.6]{Kat95} cannot be used to obtain strong convergence of
the resolvents in this example, so we employ an approach developed by Olaf Post~\cite{Pos06} instead,
where we use the notation of~\cite{MugNitPos10}.

\begin{theo}\label{olafrev}
Let $(Y_n)_{n\in\mathbb N}$ be a sequence of closed subspaces of $\mathbb C^k$,
$k\in\mathbb N$. Let $\Omega\subset{\mathbb R}^d$ be an exterior domain with
smooth compact boundary and $(\Delta_{Y_n})_{n\in\mathbb N}$ be a sequence of
Laplacians on $L^2(\Omega;{\mathbb C}^k)$ with boundary conditions
$$u|_{\partial \Omega}\in Y_n\qquad\hbox{and}\qquad \frac{\partial u}{\partial
\nu}\in Y_n^\perp\quad a.e.,\;n\in\mathbb N.$$ Assume that
there exist a subspace $Y$ of ${\mathbb C}^k$ and a family $({J}^{\downarrow n})_{n\in\mathbb
N}$ of unitary operators on $H$ converging to the identity $I$ such that ${J}^{\downarrow n}Y_n=Y$ for all
$n\in\mathbb N$. Denote by $\Delta_Y$ the Laplacian with corresponding
boundary conditions. 
Then 
$$\lim_{n\to \infty}e^{t\Delta_{Y_n}}=e^{t\Delta_Y}\qquad \hbox{ in }\mathcal L_p(L^2(\Omega;{\mathbb C}^k))$$ 
for every $t>0$ and all $p>\frac{d-1}{2}$.
\end{theo}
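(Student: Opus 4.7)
I would parallel the strategy of Theorem~\ref{thm:birm1}. Consider the positive quadratic forms $q_{Y_n}(u) \coloneqq \int_\Omega |\nabla u|^2$ on $V_n \coloneqq \{u \in H^1(\Omega;\mathbb{C}^k) : u|_{\partial\Omega} \in Y_n \text{ a.e.}\}$, and analogously $q_Y$ on $V$, with the natural embeddings into $L^2(\Omega;\mathbb{C}^k)$ playing the role of $j_n$ and $j$; the operators $-\Delta_{Y_n}$ and $-\Delta_Y$ are then positive self-adjoint and uniformly m-sectorial.

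\textbf{Strong resolvent convergence via Mosco.} To apply Theorem~\ref{thm:strongconv} I would verify conditions (i) and (ii), regarding $\Jndown$ as pointwise multiplication by the associated $k \times k$ unitary matrix. For (ii), given $u \in V$, set $u_n \coloneqq (\Jndown)^{-1} u$; then $u_n|_{\partial\Omega} \in (\Jndown)^{-1} Y = Y_n$ a.e., so $u_n \in V_n$, and since $\Jndown \to I$ as matrices we have $u_n \to u$ in $H^1(\Omega;\mathbb{C}^k)$, so $u_n \to u$ in $L^2$ and $q_{Y_n}(u_n) \to q_Y(u)$. For (i), a sequence $u_n \in V_n$ with $u_n \rightharpoonup x$ in $L^2$ and $\liminf \|\nabla u_n\|_2 < \infty$ is bounded in $H^1$, so along a subsequence $u_n \rightharpoonup x$ in $H^1$. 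Compactness of the trace operator on the compact boundary $\partial\Omega$ yields $u_n|_{\partial\Omega} \to x|_{\partial\Omega}$ in $L^2(\partial\Omega;\mathbb{C}^k)$, and $\Jndown(u_n|_{\partial\Omega}) \to x|_{\partial\Omega}$ since $\Jndown \to I$. As each $\Jndown(u_n|_{\partial\Omega})$ takes values in the closed subspace $Y \subset \mathbb{C}^k$ a.e., so does the limit, giving $x \in V$; weak lower semicontinuity of the gradient then yields $q_Y(x) \le \liminf q_{Y_n}(u_n)$. This step is the main obstacle---ensuring that the limit satisfies the $Y$-boundary condition though each approximant only satisfies the $Y_n$-version, which is precisely where the unitaries $\Jndown$ together with trace compactness come in.

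\textbf{Schatten convergence.} The form domains $V_n$ are sandwiched between $H^1_0(\Omega;\mathbb{C}^k)$ (Dirichlet, $Y=\{0\}$) and $H^1(\Omega;\mathbb{C}^k)$ (Neumann, $Y=\mathbb{C}^k$), so by Proposition~\ref{prop:dom},
$$0 \le (\lambda - \Delta_{Y_n})^{-1} - (\lambda - \Delta_{\{0\}})^{-1} \le (\lambda - \Delta_{\mathbb{C}^k})^{-1} - (\lambda - \Delta_{\{0\}})^{-1}$$
for $\lambda > 0$ large, and similarly with $Y$ in place of $Y_n$. The right hand side lies in $\mathcal{L}_p(L^2(\Omega;\mathbb{C}^k))$ for every $p > \frac{d-1}{2}$ by the classical Birman result applied componentwise, cf.~\cite{Bir08,BehLanLob10}, so the ideal property of $\mathcal{L}_p$ shows that each of the two differences is in $\mathcal{L}_p$ with an $n$-uniform bound. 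Combined with the strong resolvent convergence established above, this yields $\mathcal{L}_p$-convergence of the resolvent differences via~\cite[Prop.~2.1]{NeiZag90b}. The endgame of the proof of Theorem~\ref{thm:birm1} can then be repeated verbatim: the second resolvent identity propagates $\mathcal{L}_p$-convergence with uniform bounds to every $\mu$ in a suitable sector, and substitution into the Dunford integral $e^{t\Delta_{Y_n}} = \frac{1}{2\pi i}\int_\gamma e^{-t\lambda}(\lambda-\Delta_{Y_n})^{-1}\,d\lambda$ transfers this to the semigroups for every $t > 0$.
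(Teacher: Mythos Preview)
Your argument is correct and follows the template of Theorem~\ref{thm:birm1} closely. The one place where your proof genuinely diverges from the paper's is the mechanism for resolvent convergence. The paper does \emph{not} use Mosco convergence here: it invokes Post's framework~\cite{Pos06,MugNitPos10}, observing that the pointwise unitaries $\mathcal J^{\downarrow n}f\coloneqq J^{\downarrow n}\circ f$ are simultaneously unitary from $H$ to $H$ and from $V_n$ to $V$, which in that framework yields \emph{norm} resolvent convergence of $\Delta_{Y_n}$ to $\Delta_Y$ directly. You instead verify conditions~(i) and~(ii) of Theorem~\ref{thm:strongconv}, using the same unitaries together with compactness of the trace on the compact boundary, and obtain strong resolvent convergence.

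Both routes feed into the same endgame (domination by the Neumann--Dirichlet resolvent gap, Birman's $\mathcal L_p$ bound, the dominated-convergence step~\cite[Prop.~2.1]{NeiZag90b}, and the contour integral), so the difference is immaterial for the conclusion. Your approach has the virtue of being entirely internal to the paper---it exercises Theorem~\ref{thm:strongconv} rather than importing the Post machinery---and shows that the authors' remark preceding the theorem (that Kato's monotone criterion does not apply, prompting the use of Post's method) does not force one outside the Mosco framework. The paper's route buys the marginally stronger norm resolvent convergence, but that strength is not used downstream.
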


\begin{proof}
We introduce elliptic forms $(a_n)_{n\in\mathbb N}$ and $a_0$ with form domains
\begin{align*}
	V_n & :=\{f\in H^1(\Omega;{\mathbb C}^k):f|_{\partial\Omega}\in Y_n\} \qquad n\in\mathbb N, \\
	V & :=\{f\in H^1(\Omega;{\mathbb C}^k):f|_{\partial\Omega}\in Y\}
\end{align*}
as in~\cite[\S 2]{CarMug09}. These forms are symmetric, and accordingly the
associated Laplacians $\Delta_{Y_n}$ and $\Delta_Y$ are self-adjoint operators
on $H:=L^2(\Omega;{\mathbb C}^k)$.

We set $${\mathcal J}^{\downarrow n}
f:={\mathcal J}^{\downarrow n}_{1} f:=\Jndown\circ f\qquad \hbox{and}\qquad
{\mathcal J}^{\uparrow n} f:={\mathcal J}^{\uparrow_ n}_1 f:=\Jnup\circ
f,\qquad n\in\mathbb N.$$ Since these operators are unitary on
$H=L^2(\Omega;{\mathbb C}^k)$ as well as from $V_n$ to $V$, it is easy to see
that the assumptions in~\cite[Def.~2.3]{MugNitPos10} are satisfied, and we
deduce from~\cite[Prop.~3.4]{MugNitPos10} that $\Delta_{Y_n}$ converges to $\Delta_Y$
in the norm resolvent sense.

Moreover, $-\Delta_{\mathbb{C}^k} \le -\Delta_{Y_n} \le -\Delta_{\lbrace 0 \rbrace}$ in the form sense and
hence
\[
	| (\lambda - \Delta_{Y_n})^{-1} - (\lambda - \Delta_{Y_0})^{-1} | \le (\lambda - \Delta_{\lbrace0\rbrace})^{-1} - (\lambda - \Delta_{\mathbb{C}^k})^{-1}
\]
by~\cite[Thm.~2.21]{Kat95}. Using that $\Delta_{\lbrace0\rbrace}$ and  $\Delta_{\mathbb{C}^k}$
act as uncoupled copies of $k$ Dirichlet and Neumann Laplace operators, respectively,
we obtain from Birman's result~\cite[Thm.~3.8]{Bir08} that the operator on the right hand
side is in $\mathcal{L}_p(L^2(\Omega))$ for every $p > \frac{d-1}{2}$. The conclusion
now follows as in Theorem~\ref{thm:birm1}.
\end{proof}

\begin{rem}
We have formulated Theorems~\ref{thm:birm1} and~\ref{olafrev} in the case of
Laplacians only for the sake of simplicity: in fact, both results can be extended to
strongly elliptic operators with coefficients in
$W^{1,\infty}(\Omega)$. Also, rougher and even non-compact
boundaries can be allowed, leading to convergence only for
larger $p$, cf.~\cite[Rem.~3.4, Thm.~3.8 and Thm.~5.2]{Bir08}.
\end{rem}

\subsection{Dirichlet-to-Neumann-type operators}\label{sec:dir-to-neu}
By showing that the Dirichlet-to-Neumann operator is associated with a
$j$-elliptic form, Arendt and ter Elst have delivered a most interesting
application of their theory. This is an instance where a non-injective $j$
appears in a natural way.

Consider an open bounded domain $\Omega\subset{\mathbb R}^d$
with Lipschitz boundary, where $d \ge 2$, and let $V \coloneqq H^1(\Omega)$ and $H \coloneqq L^2(\partial\Omega)$.
We consider the sesquilinear form $a$ defined by
\[
a(u,v):=  \int_\Omega \alpha \nabla u\cdot  \overline{\nabla v},\qquad u,v\in V,
\]
where the matrix-valued coefficient $\alpha\in L^\infty(\Omega;\mathbb C^{d \times d})$ is uniformly positive definite,
i.e., for a.e.\ $x\in \Omega$ the matrix $\alpha(x)$ is Hermitian and satisfies
$$(\alpha(x)\xi|\xi)\ge k_0 |\xi|^2\qquad \hbox{ for all }\xi\in\mathbb C^d$$
for some $k_0>0$. 
Let $j$ be the trace operator from $V$ to $H$. It can be
checked as in~\cite[\S 4.4]{AreEls09} that $a$ is a $j$-elliptic symmetric form, and more precisely
\[
	a(u,u) - \omega \|j(u)\|_H^2 \ge \mu \|u\|_V^2\qquad \hbox{for all }u\in V
\]
for some $\omega \in \mathbb{R}$ and $\mu > 0$.

Let $\lambda_1^D(a)$ denote the smallest eigenvalue of the operator associated with
the restriction of $a$ to to $H^1_0(\Omega)$, i.e.,
\[
	\lambda_1^D(a) \coloneqq \inf_{u \in H^1_0(\Omega)} \frac{a(u,u)}{\|u\|_{L^2(\Omega)}^2}.
\]
Let $\gamma_0 < \lambda_1^D(a)$ and fix a complex-valued function $\gamma \in L^q(\Omega)$,
$q > \frac{d}{2}$, satisfying $\Real \gamma \ge -\gamma_0$.
Then
\[
	b(u,v):=  \int_{\Omega} \gamma u \overline{v},\qquad u,v\in V,
\]
defines a bounded sesquilinear form on $V$ by the H\"older inequality and the Sobolev embedding theorem.

\begin{prop}
	Under the above assumptions, $a+b$ is $j$-elliptic.
\end{prop}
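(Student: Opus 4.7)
The plan is to prove the $j$-ellipticity of $a+b$ directly on all of $V$, exploiting the spectral gap $\gamma_0 < \lambda_1^D(a)$. The crucial auxiliary ingredient is a \emph{Robin-to-Dirichlet} inequality: for every $\lambda < \lambda_1^D(a)$ there exists $\omega^* \ge 0$ such that
\[
a(u,u) + \omega^* \|j(u)\|_H^2 \ge \lambda \|u\|_{L^2(\Omega)}^2 \qquad \text{for all } u \in V.
\]
I would prove this by a Rellich--Kondrachov compactness argument. If the inequality failed, one could extract a sequence $(u_n) \subset V$ with $\|u_n\|_{L^2(\Omega)} = 1$, $a(u_n,u_n)$ bounded, and $\|j(u_n)\|_H \to 0$. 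Then $(u_n)$ is bounded in $H^1(\Omega)$, so passing to a subsequence $u_n \rightharpoonup u$ in $H^1(\Omega)$, $u_n \to u$ in $L^2(\Omega)$ by Rellich, and $j(u_n) \to j(u)$ in $H$ by compactness of the trace on the Lipschitz domain. Hence $u \in H^1_0(\Omega)$ with $\|u\|_{L^2(\Omega)} = 1$, and weak lower semicontinuity of the Hermitian form $a$ forces $a(u,u) \le \lambda < \lambda_1^D(a)$, contradicting the variational characterisation of $\lambda_1^D(a)$ on $H^1_0(\Omega)$.

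With this in hand, pick $\lambda$ with $\max(0,\gamma_0) < \lambda < \lambda_1^D(a)$, let $\omega^*$ be the associated constant, and choose $\alpha \in (0,1)$ with $\alpha\lambda > \gamma_0$; all of this is possible since $\lambda_1^D(a) > 0$ by uniform ellipticity. Scaling the Robin-to-Dirichlet inequality by $\alpha$, applying $(1-\alpha)a(u,u) \ge (1-\alpha)k_0 \|\nabla u\|_{L^2(\Omega)}^2$ to the remainder, and using the pointwise lower bound $\Real b(u,u) \ge -\gamma_0 \|u\|_{L^2(\Omega)}^2$ (from $\Real \gamma \ge -\gamma_0$), one obtains
\[
\Real(a+b)(u,u) + \alpha\omega^* \|j(u)\|_H^2 \ge (1-\alpha) k_0 \|\nabla u\|_{L^2(\Omega)}^2 + (\alpha\lambda - \gamma_0)\|u\|_{L^2(\Omega)}^2
\]
for all $u \in V$. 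Both coefficients on the right are strictly positive, so the right-hand side bounds $\mu' \|u\|_V^2$ from below for a suitable $\mu' > 0$, which is precisely the $j$-ellipticity of $a+b$ with shift $\omega := -\alpha\omega^*$.

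The only nontrivial input is the Robin-to-Dirichlet compactness argument; the rest is routine algebra driven by the strict gap $\gamma_0 < \lambda_1^D(a)$. Note that the hypothesis $q > d/2$ on $\gamma$ plays no further role here beyond ensuring the boundedness of $b$ on $V \times V$, which was established in the discussion immediately preceding the proposition.
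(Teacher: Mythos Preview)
Your proof is correct, and it follows a genuinely different route from the paper's.

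The paper proceeds by decomposing $V = H^1_0(\Omega) \oplus V(a)$ via Remark~\ref{jclass}: it first establishes the coercivity estimate on $H^1_0(\Omega)$ directly from the variational characterisation of $\lambda_1^D(a)$, then on $V(a)$ using Lemma~\ref{lem:eberlein} (an Ehrling-type inequality), and finally controls the cross terms $b(u_1,u_2)+b(u_2,u_1)$ by H\"older, the Sobolev embeddings, and Lemma~\ref{lem:eberlein} once more. Your approach bypasses the decomposition entirely: you prove one global ``Robin-to-Dirichlet'' inequality $a(u,u)+\omega^*\|j(u)\|_H^2 \ge \lambda\|u\|_{L^2(\Omega)}^2$ on all of $V$ by a direct Rellich--Kondrachov contradiction argument, and then a short splitting of $a$ together with the pointwise bound $\Real\gamma \ge -\gamma_0$ finishes the job. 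This is cleaner---no cross terms appear, and the hypothesis $q>\tfrac{d}{2}$ is used only for the boundedness of $b$, whereas the paper invokes it a second time in the cross-term estimate. Both proofs ultimately rest on the same two ingredients (the spectral gap $\gamma_0<\lambda_1^D(a)$ and compactness of $H^1(\Omega)\hookrightarrow L^2(\Omega)$ together with compactness of the trace), but yours packages them more economically.
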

\begin{proof}
By the variational characterisation of $\lambda_1^D$ we have
for all $u \in H^1_0(\Omega)$ that
\[
	\Real b(u,u) \ge -\gamma_0 \int_\Omega |u|^2 \ge -\frac{\gamma_0}{\lambda_1^D(a)} a(u,u)
\]
and hence
\begin{equation}\label{eq:DtNfirstpart}
	a(u,u) + \Real b(u,u) \ge \tilde{\eta} a(u,u) \ge \tilde{\eta} k_0 \int_\Omega |\nabla u|^2
		\ge \eta \|u\|_V^2
\end{equation}
for all $u \in H^1_0(\Omega)$, where $\tilde{\eta} \coloneqq 1 - \frac{\gamma_0}{\lambda_1^D(a)} > 0$
and $\eta > 0$ depends on the first eigenvalue of the Dirichlet Laplacian on $\Omega$.
Moreover, $H^1(\Omega)$ is compactly embedded into $L^2(\Omega)$ and $j$ is injective on $V(a)$,
hence for all $u \in V(a)$ we have
\[
	\Real b(u,u) \ge -\gamma_0 \int_\Omega |u|^2
		\ge \frac{\mu}{2} \|u\|_V^2 - c_\mu \|j(u)\|_H^2
\]
for some $c_\mu \ge 0$ by Lemma~\ref{lem:eberlein} and hence
\begin{equation}\label{eq:DtNsecondpart}
	a(u,u) + \Real b(u,u) - (\omega - c_\mu) \|j(u)\|_H^2 \ge \frac{\mu}{2} \|u\|_V^2
\end{equation}
for all $u \in V(a)$.

For $u \in H^1_0(\Omega)$ and $v \in V(a)$ we have
$a(v,u) = a(u,v) = 0$ by definition of $V(a)$. Moreover, for every $\eps > 0$ we have
\begin{equation}\label{eq:DtNmixed}
	\begin{aligned}
		|b(u,v)| + |b(v,u)| & \le c \|u\|_{L^p(\Omega)} \|v\|_{L^p(\Omega)}
			\le \frac{c \eps}{2} \|u\|_{L^p(\Omega)}^2 + \frac{c}{2 \eps} \|v\|_{L^p(\Omega)}^2 \\
			& \le \frac{c \eps}{2} \|u\|_V^2 + c_\eps \|j(v)\|_H^2 + \eps \|v\|_V^2
	\end{aligned}
\end{equation}
for some $p \in [2,\frac{2(d-1)}{d-2})$ and some $c, c_\eps \ge 0$
by the integrability assumptions on $\gamma$, the Sobolev embeddings theorems
and Lemma~\ref{lem:eberlein}.

Since every $u \in V$ has a representation of the form $u = u_1 + u_2$
with $u_1 \in H^1_0(\Omega)$ and $u_2 \in V(a)$ by Remark~\ref{jclass},
combining~\eqref{eq:DtNfirstpart}, \eqref{eq:DtNsecondpart} and~\eqref{eq:DtNmixed},
where in the latter we pick $\eps > 0$ such that $\frac{c \eps}{2} < \frac{\eta}{2}$ and $\eps < \frac{\mu}{4}$,
we obtain that
\begin{align*}
	& a(u,u) + \Real b(u,u) \\ 
		& \quad = a(u_1,u_1) + \Real b(u_1,u_1) + a(u_2,u_2) + \Real b(u_2,u_2) + \Real b(u_1,u_2) + \Real b(u_2,u_1)\\
		& \quad \ge \eta \|u_1\|_V^2 + \frac{\mu}{2} \|u_2\|_V^2 + (\omega - c_\mu) \|j(u_2)\|_H^2 - \frac{\eta}{2} \|u_1\|_V^2 - \frac{\mu}{4} \|u_2\|_V^2 - c_\eps \|j(u_2)\|_H^2 \\
		& \quad = \frac{\eta}{2} \|u_1\|_V^2 + \frac{\mu}{4} \|u_2\|_V^2 + \omega' \|j(u_2)\|_H^2
\end{align*}
for some $\omega' \in \mathbb{R}$. Finally, since $V = H^1_0(\Omega) \oplus V(a)$ by Remark~\ref{jclass},
the expression $|||u|||^2 \coloneqq \|u_1\|_V^2 + \|u_2\|_V^2$ defines an equivalent norm on $V$, which allows
us to write the previous estimate as
\[
	a(u,u) + \Real b(u,u) - \omega' \|j(u)\|_H^2 \ge \mu' \|u\|_V^2\qquad \hbox{for all } u\in V,
\]
for some $\mu' > 0$. This is the $j$-ellipticity of $a+b$.
\end{proof}

Following~\cite[\S 4.4]{AreEls09} one can check that the
operator $-D^\gamma_{\alpha}$ associated with $a+b$ is some \emph{Dirichlet-to-Neumann} operator. 
More precisely, $\varphi \in L^2(\partial\Omega)$ is in $D(D^\gamma_{\alpha})$
if and only if there exists a (necessarily unique) weak solution of the inhomogeneous
Dirichlet problem
\begin{equation}\tag{IDP}
\left\{
\begin{aligned}
\gamma u - \operatorname{div}(\alpha\nabla u)&=0,\qquad &&x\in\Omega,\\
u(z)&=\varphi(z),\qquad &&z\in\partial\Omega,
\end{aligned}
\right.
\end{equation}
and the weak conormal derivative $\frac{\partial u}{\partial \nu_\alpha}$ exists as an element of
$L^2(\partial\Omega)$.
In this case, $-D_\alpha^\gamma u = \frac{\partial u}{\partial \nu_\alpha}$.
The above considerations show that $D_\alpha^\gamma$ generates an analytic semigroup.
We formulate this as a theorem.

\begin{theo}\label{theo:d-zu-n-gen}
Let $\Omega\subset{\mathbb R}^d$ be an open bounded domain with Lipschitz
boundary, where $d \ge 2$. Let $\alpha\in L^\infty(\Omega;\mathbb C^{d \times d})$ be uniformly positive definite,
and let $\gamma \in L^q(\Omega;{\mathbb C})$, $q > \frac{d}{2}$, be such that $\Real \gamma \ge -\gamma_0$ 
for some $\gamma_0 < \lambda_1^D$.
Then the operator $D^\gamma_{\alpha}$ generates an analytic semigroup on $H = L^2(\partial\Omega)$, which is Gibbs if additionally $\gamma\ge 0$.

If $\Omega$, $\alpha$ and $\gamma$ are smooth, then the analyticity angle of this semigroup is $\frac{\pi}{2}$.
\end{theo}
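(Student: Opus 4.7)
The first assertion, that $D^\gamma_\alpha$ generates an analytic $C_0$-semigroup on $L^2(\partial\Omega)$, is immediate from the preceding proposition: the form $a+b$ has just been shown to be $j$-elliptic for the trace $j\colon H^1(\Omega) \to L^2(\partial\Omega)$, and Definition~\ref{def:complete} directly associates to it an analytic semigroup, namely $(e^{tD^\gamma_\alpha})_{t \ge 0}$.

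For the Gibbs property under the additional hypothesis $\gamma \ge 0$, the plan is to apply Proposition~\ref{prop:ultra} on the finite measure space $\partial\Omega$ and then invoke Remark~\ref{versch}(2). Two conditions must be verified. The embedding $j(V) = H^{1/2}(\partial\Omega) \subset L^q(\partial\Omega)$ for some $q > 2$ follows from the Sobolev embedding on the $(d-1)$-dimensional Lipschitz manifold $\partial\Omega$, with any $q < \infty$ admissible when $d = 2$ and $q = 2(d-1)/(d-2)$ when $d \ge 3$. The $L^\infty(\partial\Omega)$-contractivity I would check via the invariance criterion \cite[Prop.~2.9]{AreEls09}: given $u \in V$ with trace $\varphi = j(u)$, the pointwise truncation $w := (|u|\wedge 1)\sgn u$ lies in $H^1(\Omega)$, has trace $(|\varphi|\wedge 1)\sgn\varphi$, and a standard chain-rule computation exploiting the positive definiteness of $\alpha$ together with $\gamma \ge 0$ yields $\Real(a+b)(w,u-w) \ge 0$. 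Ultra-contractivity then follows from Proposition~\ref{prop:ultra}, and finiteness of $|\partial\Omega|$ converts this to the Gibbs property via Remark~\ref{versch}(2).

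For the angle $\pi/2$ under smoothness, the plan is to apply Proposition~\ref{lemma:crouz}. Since $\alpha$ is Hermitian the form $a$ is real-symmetric, so the imaginary part of $a+b$ is carried entirely by $b$ and $|\Ima(a+b)(u,u)| \le \|\Ima\gamma\|_\infty \|u\|_{L^2(\Omega)}^2$. It therefore suffices to establish
\[
	\|u\|_{L^2(\Omega)}^2 \le C\|u\|_V \|j(u)\|_H \qquad \hbox{for all } u \in V(a+b).
\]
Such a $u$ is a weak solution of $-\operatorname{div}(\alpha\nabla u) + \gamma u = 0$ with trace $\varphi = j(u)$, and integration by parts against a solution $v \in H^2(\Omega) \cap H^1_0(\Omega)$ of the smooth adjoint problem $-\operatorname{div}(\alpha \nabla v) + \bar\gamma v = \bar u$ produces
\[
	\|u\|_{L^2(\Omega)}^2 = -\int_{\partial\Omega} \varphi\, \overline{\alpha \nabla v \cdot \nu} \le \|\varphi\|_{L^2(\partial\Omega)} \|v\|_{H^2(\Omega)} \le C \|\varphi\|_{L^2(\partial\Omega)} \|u\|_{L^2(\Omega)},
\]
using the smooth-domain elliptic regularity estimate $\|v\|_{H^2} \le C\|u\|_{L^2}$. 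Dividing by $\|u\|_{L^2}$ and bounding it by $\|u\|_{H^1}$ yields the required Crouzeix estimate. The main obstacle is the invariance criterion: the chain-rule and the sign of $\gamma$ are standard ingredients, but one must carefully track the interaction of the truncation with the non-injective $j$ inside the $j$-elliptic framework. The Crouzeix step is conceptually straightforward once the smoothness hypothesis is invoked to justify the duality argument with the classical normal derivative.
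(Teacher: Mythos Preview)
Your proposal is correct. The generation and Gibbs parts match the paper's argument essentially verbatim: $j$-ellipticity gives the analytic semigroup, and for $\gamma\ge 0$ the invariance criterion \cite[Prop.~2.9]{AreEls09} yields $L^\infty(\partial\Omega)$-contractivity, after which Proposition~\ref{prop:ultra} plus the Sobolev embedding on $\partial\Omega$ give ultra-contractivity and hence the Gibbs property.

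For the angle $\frac{\pi}{2}$ you take a genuinely different route. The paper invokes \cite[Thm.~2.7.4]{LioMag72}, which asserts that under smoothness the trace is an isomorphism from $\{u\in H^{1/2}(\Omega):\gamma u-\operatorname{div}(\alpha\nabla u)=0\}$ onto $L^2(\partial\Omega)$; this converts $\|j(u)\|_{L^2(\partial\Omega)}$ into $\|u\|_{H^{1/2}(\Omega)}$ on $V(a+b)$, and then the Crouzeix estimate is immediate from boundedness of $\gamma$ and the trivial inequality $\|u\|_{L^2}^2\le\|u\|_{H^{1/2}}\|u\|_{H^1}$. You instead run an explicit duality argument: solve the adjoint Dirichlet problem in $H^2\cap H^1_0$ and integrate by parts to obtain directly $\|u\|_{L^2(\Omega)}\le C\|j(u)\|_{L^2(\partial\Omega)}$. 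Your argument is more self-contained (it only needs standard $H^2$-regularity and the Fredholm alternative, both of which hold since $\Real\gamma>-\lambda_1^D$ guarantees invertibility of the Dirichlet realisation and hence of its adjoint), whereas the paper's version appeals to a stronger off-the-shelf transposition theorem. Both land in the same place; the paper's citation additionally yields the $H^{1/2}$ control, but this extra strength is not used.
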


\begin{proof}
Let us prove the assertion on the analyticity angle. Let everything be smooth,
so that in particular $\gamma$ is bounded. By Proposition~\ref{lemma:crouz} it suffices to check that
$$M \|u\|_{H^1(\Omega)} \|u\|_{L^2(\partial\Omega)}\ge \left|\Ima \int_\Omega \gamma |u|^2 \right|\qquad\hbox{for all }u\in V(a+b)$$
holds for some $M \ge 0$, where $V(a+b)$ consists by definition of all
$H^1$-functions that are weak solutions of $\rm(IDP)$ for some $\gamma$.
Since $\Real \gamma \ge -\gamma_0$ the only function $u \in H^1_0(\Omega)$ satisfying $\gamma u - \operatorname{div}(\alpha \nabla u) = 0$
is $u=0$.
Hence by~\cite[Thm.~2.7.4]{LioMag72} the trace operator is an isomorphism from
$$\{ u\in H^\frac{1}{2}(\Omega): \gamma u - \operatorname{div}(\alpha \nabla u) = 0 \}$$
onto $L^2(\partial \Omega)$.
Accordingly, the estimate in Proposition~\ref{lemma:crouz} can be equivalently formulated as
$$M \|u\|_{H^1(\Omega)} \|u\|_{H^{\frac12}(\Omega)}\ge \left|\Ima \int_\Omega \gamma|u|^2 \right|\qquad\hbox{for all }u\in V(a+b)$$
for some possibly larger constant $M$. This is satisfied whenever $\gamma$ is bounded.

Assume now that $\gamma \ge 0$. Then by~\cite[Prop.~2.9]{AreEls09} the Dirichlet-to-Neumann semigroup of Theorem~\ref{theo:d-zu-n-gen}
submarkovian, i.e., positive and $L^\infty(\partial\Omega)$-contractive,
which is easily checked by a version of an invariance criterion due to Ouhabaz
for $j$-elliptic forms~\cite[Prop.~2.9]{AreEls09}, see also~\cite[Prop.~3.7]{AreEls10}.
In this case Proposition~\ref{prop:ultra} and the Sobolev embedding theorems for $\partial\Omega$
(see e.g.~\cite[Thm.~2.20]{Aub82}) yield in particular that
the Dirichlet-to-Neumann semigroup is a Gibbs semigroup.
\end{proof}

\begin{rem}
For the last step of the preceding proof we only need that $\gamma \in L^{\frac{2d}{3}}(\Omega)$.
Hence one could suspect
that for all such $\gamma$ the operator $D^\gamma_\alpha$ generates a cosine
operator function without any additional conditions on the smoothness of
$\alpha$, $\Omega$ and $\gamma$. However, to extend the result to this situation
we would need a generalisation of~\cite[Thm.~2.7.4]{LioMag72} to rough domains and rough
coefficients. A partial result into this direction is~\cite[Lemma.~3.1]{GesMit09},
where for the Laplace operator \cite[Thm.~2.7.4]{LioMag72} is extended to Lipschitz domains.
\end{rem}

\begin{rem}
We regard the perturbation we are considering as interesting mainly because
it cannot be expressed as a perturbation by an operator. In comparison,
if for smooth $\Omega$ we consider the vaguely related sesquilinear form $b':V\times V\to \mathbb C$ defined by
$b'(u,v):=  \int_{\partial\Omega} \beta u \overline{v}$
with $\beta \in L^{d-1}(\partial\Omega)$,
then $a + b'$ is associated with $-D^\gamma_\alpha - B$, where $B$ is a bounded operator
from $D(D^\gamma_\alpha)$ to $L^2(\Omega)$,
and we can deal with it using perturbation theorems for generators.
\end{rem}

\begin{rem}\label{rem:dtnSM}
The Gibbs property of the semigroup in Theorem~\ref{theo:d-zu-n-gen} has been observed before by Zagrebnov~\cite[Lemma~2.14]{Zag08}. 
His sketch of the proof is based on
a Weyl-type asymptotic result for the Dirichlet-to-Neumann
operator~\cite[Prop~2.5]{Zag08}, which seems to require
smoothness of the boundary. A complete proof is announced for a forthcoming
(but not yet accessible) joint paper with Hassan Emamirad.
\end{rem}

In the self-adjoint case we can also prove the following convergence result.
\begin{theo}
Let $\Omega\subset{\mathbb R}^d$ be an open bounded domain with Lipschitz
boundary, where $d \ge 2$. Let $(\alpha_n)_{n\in \mathbb N}\subset
L^\infty(\Omega;\mathbb C^{d \times d})$ be such that $\alpha_n(x)$ is uniformly positive
definite uniformly with respect to $n$, i.e.,
$$(\alpha_n(x)\xi|\xi)\ge k_0 |\xi|^2\qquad \hbox{for a.e. }x\in \Omega,\; \hbox{all }n\in \mathbb N\hbox{ and all }\xi\in\mathbb C^d$$
for some $k_0>0$. Let finally $(\gamma_n)_{n\in\mathbb N}\subset L^q(\Omega;{\mathbb C})$,
$q > \frac{d}{2}$, be such that $\Real \gamma_n \ge -\gamma_0$ a.e.\ for some 
$$\gamma_0 < \inf_{n \in \mathbb{N}} \inf_{u \in H^1_0(\Omega)} \frac{\int_\Omega \alpha_n \nabla u \cdot \overline{\nabla u}}{\|u\|_{L^2(\Omega)}^2}.$$
If $\lim_{n\to \infty}\gamma_n = \gamma$ and $\lim_{n\to \infty}\alpha_n= \alpha$ almost everywhere, then 
$$\lim_{n\to \infty}e^{-tD^{\gamma_n}_{\alpha_n}} = e^{-tD^\gamma_\alpha} \qquad \hbox{in }\mathcal{L}_1(L^2(\partial\Omega))$$
for every $t > 0$.
\end{theo}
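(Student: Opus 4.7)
The plan is to combine the Mosco convergence criterion (Remark \ref{rem:moscosym}) with Theorem \ref{thm:semiconv}. To this end, I must verify strong resolvent convergence $-D^{\gamma_n}_{\alpha_n} \to -D^\gamma_\alpha$ and exhibit a single symmetric form $(\tilde b, j)$ that is dominated by every $(a_n + b_n, j)$ in the sense of Proposition \ref{prop:dom} and whose associated operator generates a Gibbs semigroup on $L^2(\partial \Omega)$.

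Inspection of the preceding proposition shows that the $j$-ellipticity constants of $(a_n + b_n, j)$ are uniform in $n$, since they depend only on $k_0$, a uniform $L^q$-bound on $(\gamma_n)$, and the spectral gap $\inf_n \lambda_1^D(a_n) - \gamma_0 > 0$. For the recovery sequence in condition (ii) of the Mosco criterion I take the constant choice $u_n := u$; then $(a_n + b_n)(u, u) \to (a + b)(u, u)$ by Lebesgue's theorem, using a.e.\ convergence of the coefficients. For the liminf condition (i), suppose $u_n \in V = H^1(\Omega)$ with $j(u_n) \rightharpoonup \varphi$ in $L^2(\partial\Omega)$ and $\liminf_n (a_n + b_n)(u_n, u_n) < \infty$. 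Uniform $j$-ellipticity bounds $(u_n)$ in $V$, hence along a subsequence $u_n \rightharpoonup u$ weakly in $V$, and $j(u) = \varphi$. The liminf inequality for the diffusion part then follows from the positive-definiteness trick: expanding $\int \alpha_n |\nabla (u_n - u)|^2 \ge 0$ yields
\[
a_n(u_n, u_n) \ge 2 \Real \int_\Omega \alpha_n \nabla u \cdot \overline{\nabla u_n} - \int_\Omega \alpha_n \nabla u \cdot \overline{\nabla u},
\]
and dominated convergence $\alpha_n \nabla u \to \alpha \nabla u$ in $L^2(\Omega)^d$ combined with $\nabla u_n \rightharpoonup \nabla u$ in $L^2$ passes to the liminf. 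For the potential part, the compact embedding $H^1(\Omega) \hookrightarrow L^{2q'}(\Omega)$ (available since $q > d/2$) gives $|u_n|^2 \to |u|^2$ in $L^{q'}$; together with $\gamma_n \to \gamma$ a.e.\ and the uniform $L^q$-bound, Vitali's theorem yields $b_n(u_n, u_n) \to b(u, u)$.

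As dominating form I take
\[
\tilde b(u, v) := k_0 \int_\Omega \nabla u \cdot \overline{\nabla v} - \gamma_0 \int_\Omega u \bar v, \qquad u, v \in V,
\]
which by $\alpha_n \ge k_0 I$ and $\Real \gamma_n \ge -\gamma_0$ satisfies $\tilde b(u,u) \le (a_n + b_n)(u,u)$ pointwise on $V$, hence $(\tilde b, j) \le (a_n + b_n, j)$ in the sense of Proposition \ref{prop:dom}. The form $\tilde b$ is the Dirichlet--to--Neumann form associated with the constant coefficients $(k_0 I, -\gamma_0)$; its $j$-ellipticity follows from the spectral-gap hypothesis via the decomposition argument of the previous proposition (after a scalar shift to accommodate the case $\gamma_0 > 0$), and the Gibbs property of $-\tilde B$ follows from Theorem \ref{theo:d-zu-n-gen} after replacing $\tilde B$ by $\tilde B + MI$ for $M$ large so as to reduce to the case of a nonnegative zero-order coefficient, a shift that preserves the trace-class structure of the semigroup. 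Theorem \ref{thm:semiconv} then delivers the trace norm convergence claimed.

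The main obstacle I anticipate is the liminf inequality for the diffusion term: since $(\alpha_n)$ converges only pointwise almost everywhere and not in any operator norm, one cannot naively pass to the limit in $\int \alpha_n |\nabla u_n|^2$ with $u_n$ merely weakly convergent in $H^1$, and the convexity trick above is essential. A secondary subtlety is the verification of $j$-ellipticity and the Gibbs property for the dominating form in the case $\gamma_0 > 0$, which requires combining the pointwise domination with scalar shifts in order to bring the construction within the scope of Theorem \ref{theo:d-zu-n-gen}.
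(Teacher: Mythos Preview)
Your proposal is correct and follows essentially the same route as the paper: the same dominating form $\tilde b(u,v)=k_0\int_\Omega\nabla u\cdot\overline{\nabla v}-\gamma_0\int_\Omega u\bar v$, Mosco convergence via Remark~\ref{rem:moscosym}, and Theorem~\ref{thm:semiconv} to conclude. Your liminf argument via the convexity trick is in fact more careful than the paper's (which simply invokes ``weak lower semicontinuity of the norm in $H^1(\Omega)$'' without addressing the varying $\alpha_n$); one small slip is that your shift $\tilde B\mapsto\tilde B+MI$ contributes a \emph{boundary} term $M\int_{\partial\Omega}|u|^2$ to the form rather than an interior one, so it does not literally make the zero-order coefficient nonnegative---but the paper is equally terse on this point, citing only Remark~\ref{rem:dtnSM}.
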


\begin{proof}
Define
\[
	b(u,v) \coloneqq k_0 \int_\Omega  \nabla u\cdot \overline{\nabla v} -  \gamma_0 \int_\Omega u \overline{v}
\]
for $u, v \in H^1(\Omega)$, and let $j$ be the trace operator from $H^1(\Omega)$ to $L^2(\partial\Omega)$.
Then $(b,j) \le (a_n,j)$ in the sense of Proposition~\ref{prop:dom} for all $n \in \mathbb{N}$,
where $a_n$ denotes the form associated with $-D^{\gamma_n}_{\alpha_n}$.
Moreover, the semigroup associated with $(b,j)$ is a Gibbs semigroup by Remark~\ref{rem:dtnSM}.
So in view of Theorem~\ref{thm:semiconv} it only remains to show that $D^{\gamma_n}_{\alpha_n} \to D^\gamma_\alpha$
in the strong resolvent sense, for which we employ Remark~\ref{rem:moscosym}.

Let $(u_n)$ be a sequence in $H^1(\Omega)$ such that $u_n|_{\partial\Omega} \rightharpoonup \varphi$ in $L^2(\Omega)$
and $s \coloneqq \liminf a_n(u_n,u_n) < \infty$. Since the constants in the $j_n$-ellipticity of $a_n$
are uniform with respect to $n$, the sequence $(u_n)$ is bounded in $V = H^1(\Omega)$, and thus
we may assume that $u_n \rightharpoonup u$ in $H^1(\Omega)$ for some $u \in H^1(\Omega)$. Then by compactness
$\lim_{n\to \infty}u_n = u$ in $L^2(\Omega)$ and moreover $u_n|_{\partial\Omega} \to u|_{\partial\Omega}$ in $L^2(\partial\Omega)$.
From this and weak lower semicontinuity of the norm in $H^1(\Omega)$ it follows immediately that $a(u,u) \le s$, where $a$ denotes the form associated
with $-D^\gamma_\alpha$.
Moreover, if $u \in H^1(\Omega)$, then clearly $a_n(u_n,u_n) \to a(u,u)$.
Now the convergence follows from Remark~\ref{rem:moscosym}.
\end{proof}

\subsection{Multiplicative perturbations of Laplacians}

Let $\Omega \subset \mathbb{R}^d$ be an open, bounded set.
Let $V = H^1_0(\Omega)$ and $H = L^2(\Omega)$. Define $a(u,v) \coloneqq \int_\Omega \nabla u \overline{\nabla v}$
and $j(u) \coloneqq \frac{u}{m}$, where the real-valued function $m$ on $\Omega$ satisfies $0 < \eps \le m \le M < \infty$  for some constants $\epsilon$ and $M$. Then for the operator $-A_m$ associated
with $(a,j)$ we have $u \in D(A_m)$ with $-A_mu = f$ if and only if $u \in H^1_0(\Omega)$ and $\Delta u = \frac{f}{m}$
distributionally, i.e., at least symbolically, $A_m = -m\Delta$ with Dirichlet boundary conditions.

\begin{theo}
	Let $(m_n)_{n\in \mathbb N}$ be a sequence of measurable functions from $\Omega$ to $\mathbb R$ such that $0 < \eps \le m_n \le M < \infty$
	for all $n \in \mathbb{N}$. If this sequence converges a.e.\ to a measurable function $m:\Omega \to \mathbb R$, then 
	$$\lim_{n\to \infty}e^{tm_n\Delta} = e^{tm\Delta}\qquad \hbox{in }\mathcal{L}_1(L^2(\Omega))$$ 
	for every $t > 0$.
\end{theo}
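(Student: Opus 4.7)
The plan is to invoke Theorem~\ref{thm:semiconv} for the $j$-elliptic forms $(a,j_{m_n})$ approaching $(a,j_m)$, where $a(u,v):=\int_\Omega \nabla u\cdot\overline{\nabla v}$ is the common quadratic form on $V=H^1_0(\Omega)$ and $j_w(u):=u/w$. We must check strong resolvent convergence $A_{m_n}\to A_m$, and exhibit a uniform dominating form whose associated operator generates a Gibbs semigroup.

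Strong resolvent convergence follows from Theorem~\ref{thm:strongconv} via Mosco convergence. The recovery sequence is immediate: for $u\in H^1_0$ set $u_n:=u$; then $a(u_n,u_n)\equiv a(u,u)$ and $u/m_n\to u/m$ in $L^2$ by Lebesgue's dominated convergence, using $1/m_n\to 1/m$ almost everywhere together with the uniform bound $1/m_n\le 1/\eps$. For the $\liminf$ condition, suppose $u_n\in H^1_0$ with $u_n/m_n\rightharpoonup x$ in $L^2$ and $a(u_n,u_n)=\|\nabla u_n\|^2$ bounded. Poincar\'e's inequality bounds $(u_n)$ in $H^1_0$, so after extraction $u_n\rightharpoonup u$ in $H^1_0$ and, via Rellich--Kondrachov, $u_n\to u$ strongly in $L^2$. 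The decomposition
\[
\frac{u_n}{m_n}-\frac{u}{m}=\frac{u_n-u}{m_n}+u\Bigl(\frac{1}{m_n}-\frac{1}{m}\Bigr)
\]
shows $u_n/m_n\to u/m$ in $L^2$ (the first summand controlled by $1/\eps$, the second by dominated convergence), so $x=j_m(u)$, and weak lower semicontinuity of the $H^1_0$-seminorm yields $a(u,u)\le\liminf a(u_n,u_n)$.

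For the Gibbs domination, the weight inequality
\[
\|j_{m_n}(u)\|^2_{L^2}=\int_\Omega \frac{u^2}{m_n^2}\le \int_\Omega \frac{u^2}{\eps^2}=\|j_\eps(u)\|^2_{L^2}
\]
together with equality of the quadratic forms places us in the hypotheses of Theorem~\ref{thm:compspec}, yielding the uniform eigenvalue bound $\eps^2\lambda_k(-\Delta_D)=\lambda_k(A_\eps)\le \lambda_k(A_{m_n})$ for every $k,n\in\mathbb N$, where $\Delta_D$ is the Dirichlet Laplacian on $\Omega$. Since $\Omega$ is bounded, $-\Delta_D$ generates a Gibbs semigroup, so $\sup_n \tr(e^{-tA_{m_n}})\le \tr(e^{t\eps^2\Delta_D})<\infty$ for every $t>0$. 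Combined with the strong convergence from the previous step, this provides precisely the ingredient behind Theorem~\ref{thm:semiconv}, giving $e^{-tA_{m_n}}\to e^{-tA_m}$ in $\mathcal L_1(L^2(\Omega))$.

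The main obstacle is the $\liminf$ step of Mosco convergence: identifying the weak $L^2$-limit of $u_n/m_n$ as $u/m$ requires combining the $H^1_0$-energy bound, the Rellich--Kondrachov compactness (crucially relying on $\Omega$ being bounded), and dominated convergence. A minor technicality is that strict form domination $(a,j_\eps)\le(a,j_{m_n})$ in the sense of Proposition~\ref{prop:dom} may fail for merely measurable $m_n$, since $j_{m_n}(V)$ need not lie in $H^1_0=j_\eps(V)$; however, the eigenvalue comparison via Theorem~\ref{thm:compspec} is all that is really required to produce the uniform trace-norm bound underlying Theorem~\ref{thm:semiconv}.
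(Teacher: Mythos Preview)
Your Mosco-convergence argument for strong resolvent convergence is correct and follows the paper's proof.

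The gap is in the domination step. You are right that form domination $(a,j_\eps)\le(a,j_{m_n})$ in the sense of Proposition~\ref{prop:dom} can fail for merely measurable $m_n$; but your proposed substitute, eigenvalue comparison via Theorem~\ref{thm:compspec}, is not enough to conclude trace-norm convergence. What lies behind Theorem~\ref{thm:semiconv} (through Zagrebnov's lemma) is the \emph{operator} inequality $(\gamma+A_n)^{-1}\le(\gamma+B)^{-1}$; a uniform bound $\sup_n\tr(e^{-tA_n})<\infty$ together with strong convergence does \emph{not} imply convergence in $\mathcal{L}_1$. A counterexample on $\ell^2$: let $A_n$ be diagonal with entries $\lambda^{(n)}_k=k$ for $k\ne n$ and $\lambda^{(n)}_n=0$, and let $A$ be diagonal with entries $\lambda_k=k$. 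Then $A_n\to A$ in the strong resolvent sense, and the $k$th smallest eigenvalue of $A_n$ is at least $k-1$, so $\lambda_k(B)\le\lambda_k(A_n)$ holds for the Gibbs-generating diagonal operator $B$ with $\lambda_k(B)=k-1$, and $\tr(e^{-tA_n})$ is bounded uniformly in $n$. Yet $e^{-tA_n}-e^{-tA}=(1-e^{-tn})\,(\cdot\,|\,e_n)e_n$ has trace norm $1-e^{-tn}\to1$. Hence the assertion that the eigenvalue bound ``provides precisely the ingredient behind Theorem~\ref{thm:semiconv}'' is false: one needs a genuine resolvent or operator inequality, not merely an eigenvalue (or singular-value) bound. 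The paper itself simply invokes comparison with $-\eps\Delta$ without addressing the form-domain issue you raise, so your concern is legitimate and the paper's proof is sketchy at this point too; but eigenvalue comparison does not close the gap.
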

\begin{proof}
	Comparing with the Gibbs semigroup  generated by $-\eps \Delta$, we see as in the previous section that it suffices to prove
	convergence of $-m_n \Delta$ to $-m\Delta$ in the strong resolvent sense. So take
	a sequence $(u_n)$ in $H^1(\Omega)$ such that $m_n u_n \rightharpoonup m u$ in $L^2(\Omega)$
	and $s \coloneqq \liminf_{n \to \infty} a(u_n,u_n) < \infty$. Then $u_n \rightharpoonup u$
	in $H^1(\Omega)$ after passing to a subsequence, and hence $\lim_{n\to \infty}u_n = u$ by compact embedding,
	which shows in particular that $u \in H^1(\Omega)$. The relation $a(u,u) \le s$ is obvious from weak lower semicontinuity
	of the norm in $H^1(\Omega)$.
	On the contrary, if $u \in H^1(\Omega)$, then $m_n u \to m u$ in $L^2(\Omega)$.
	Hence we obtain convergence in the strong resolvent sense from Theorem~\ref{thm:strongconv}.
\end{proof}

\begin{rem}
For every $k \in \mathbb{N}$, the $k$\textsuperscript{th} eigenvalue $\lambda_k(A_m)$ of $A_m$ is
an increasing function of $m$. More precisely, if $m_1 \le m_2$ almost everywhere, then $\|\frac{u}{m_1}\|_2 \ge \|\frac{u}{m_2}\|_2$
for all $u \in H^1_0(\Omega)$ and hence $\lambda_k(A_{m_1}) \le \lambda_k(A_{m_2})$ by Theorem~\ref{thm:compspec}.
By the way, the operators can in general not be compared in the sense of positive definiteness, as they are not self-adjoint on the same reference space,
so the expression $A_{m_1} \le A_{m_2}$ is not defined and we have to resort to the eigenvalues if we wish to compare the operators in some way.
\end{rem}

\subsection{Comparison of self-adjoint elliptic operators}

Let $\Omega\subset {\mathbb R^d}$ be a bounded open domain with Lipschitz boundary. Let $V:=H^1(\Omega)$ and define
\[
	a(u,v):=\int_\Omega \nabla u \cdot \overline{\nabla v}+\int_{\partial \Omega} \beta u \overline{v} d\sigma
\]
for a given real-valued function $\beta \in L^\infty(\partial \Omega)$. We consider the operators
$j_1 \colon V \to L^2(\Omega)$, $j_2 \colon V \to L^2(\Omega) \times L^2(\partial\Omega)$ and $j_3\colon V \to L^2(\partial\Omega)$
given by $j_1(u) \coloneqq u$, $j_2(u) \coloneqq (u, u_{|\partial\Omega})$ and $j_3(u) \coloneqq u_{|\partial\Omega}$, respectively.
Then $a$ is a $j_k$-elliptic form and
we denote the operator associated with $(a,j_k)$ by $A_k$, $k=1,2,3$. These operators are given by
\begin{align*}
	u \in D(A_1), \; A_1u = f
		& \quad \Leftrightarrow \quad
	\left\{ \begin{aligned}
		-\Delta u & = f \\
		\frac{\partial u}{\partial \nu} + \beta u & = 0
	\end{aligned} \right. \\
	(u,u_{|\partial\Omega}) \in D(A_2), \; A_2(u,u_{|\partial\Omega}) = (f,g)
		& \quad \Leftrightarrow \quad
	\left\{ \begin{aligned}
		-\Delta u & = f \\
		\frac{\partial u}{\partial \nu} + \beta u & = g
	\end{aligned} \right. \\
	\varphi \in D(A_3), \; A_3\varphi = g
		& \quad \Leftrightarrow \quad
	\exists u \in H^1(\Omega) : \left\{ \begin{aligned}
		-\Delta u & = 0 \\
		u_{|\partial\Omega} & = \varphi \\
		\frac{\partial u}{\partial \nu} + \beta u & = g
	\end{aligned} \right.
\end{align*}
where the Laplace operator and the normal derivative are understood in a weak sense,
see~\cite[\S 4.4]{AreEls09} for $A_3$.

Now Theorem~\ref{thm:compspec} yields that
\[
	\lambda_k(A_2) \le \lambda_k(A_1) \quad\text{and}\quad \lambda_k(A_2) \le \lambda_k(A_3)\qquad \hbox{for all }k \in \mathbb{N}. 
\]
These results have also been obtained in~\cite[Thm.~4.2 and Thm.~4.3]{BelFra05} by the same argument.

\subsection{Convergence and non-convergence of Wentzell--Robin operators}\label{sec:wro}
Let $\Omega \subset \mathbb{R}^d$ be a bounded Lipschitz domain and define
$V \coloneqq \{ (u,u|_{\partial\Omega}) : u \in H^1(\Omega) \}$ and $H \coloneqq L^2(\Omega) \times L^2(\partial\Omega)$.
Then
\[
	a( (u,u|_{\partial\Omega}), (v,v|_{\partial\Omega}) ) := \int_\Omega \nabla u \cdot \overline{\nabla v}
\]
defines a bounded sesquilinear form on $V$. We consider the embeddings
\[
	j_{\rho,\sigma}( (u,u|_{\partial\Omega}) ) \coloneqq (\rho u, \sigma u|_{\partial\Omega})
\]
of $V$ into $H$, where $\sigma > 0$ and $\rho > 0$ are constants. Then clearly $a$ is a positive $j_{\rho,\sigma}$-elliptic form,
and the associated operator $A_{\rho,\sigma}$ is (at least on a formal level) given by 
\[
	A_{\rho,\sigma} (u, u|_{\partial\Omega}) = \Bigl( -\frac{1}{\rho} \Delta u, \frac{1}{\sigma} \frac{\partial u}{\partial \nu} \Bigr).
\]

\begin{theo}\label{theo:moscoappl}
The operator $A_{1,\sigma}$ converges to $-\Delta_D \oplus 0$  in the strong resolvent sense as $\sigma \to \infty$,
where $\Delta_D$ denotes the Dirichlet Laplacian on $L^2(\Omega)$.
\end{theo}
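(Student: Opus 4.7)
My plan is to realise $-\Delta_D \oplus 0$ as the operator associated with a $j$-elliptic form on $H = L^2(\Omega) \times L^2(\partial\Omega)$ and then deduce strong resolvent convergence of $A_{1,\sigma}$ via the Mosco criterion of Theorem~\ref{thm:strongconv}. As the limit data I would take $V_\infty \coloneqq H^1_0(\Omega) \oplus L^2(\partial\Omega)$ with $j_\infty \colon V_\infty \to H$ the canonical inclusion, and
\[
	a_\infty\bigl((u,\phi),(v,\psi)\bigr) \coloneqq \int_\Omega \nabla u \cdot \overline{\nabla v}.
\]
A short computation (e.g.\ with $\omega = -1$ and $\mu = 1$) shows that $(a_\infty,j_\infty)$ is a positive, symmetric, $j_\infty$-elliptic form whose associated operator is precisely $-\Delta_D \oplus 0$. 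Since strong resolvent convergence is determined by sequences, it suffices to prove Mosco convergence of $(a, j_{1,\sigma_n})$ to $(a_\infty,j_\infty)$ along any sequence $\sigma_n \to \infty$.

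For condition~(i) of Theorem~\ref{thm:strongconv}, I would start from $u_n \in H^1(\Omega)$ with $(u_n, \sigma_n u_n|_{\partial\Omega}) \rightharpoonup (f,\phi)$ in $H$ and $\liminf_{n \to \infty} \|\nabla u_n\|_{L^2}^2 < \infty$. Along a subsequence realising the liminf the functions $u_n$ are bounded in $H^1(\Omega)$, so after a further extraction $u_n \rightharpoonup f$ in $H^1(\Omega)$. Boundedness of $\sigma_n u_n|_{\partial\Omega}$ in $L^2(\partial\Omega)$ forces $\|u_n|_{\partial\Omega}\|_{L^2(\partial\Omega)} \to 0$, whereas compactness of the trace yields $u_n|_{\partial\Omega} \to f|_{\partial\Omega}$ in $L^2(\partial\Omega)$; hence $f \in H^1_0(\Omega)$, $(f,\phi) \in V_\infty$, and weak lower semicontinuity of $\|\nabla\cdot\|_{L^2}$ closes the argument.

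For condition~(ii), given $(u,\phi) \in V_\infty$ I would approximate $\phi$ in $L^2(\partial\Omega)$ by a sequence $\phi_k \in H^{1/2}(\partial\Omega)$, lift each $\phi_k$ to some $w_k \in H^1(\Omega)$ via a bounded right inverse of the trace with $\|w_k\|_{H^1(\Omega)} \le C\|\phi_k\|_{H^{1/2}(\partial\Omega)}$, and extract diagonally a sequence $k_n \to \infty$ with $\sigma_n^{-1}\|w_{k_n}\|_{H^1(\Omega)} \to 0$. Setting $v_n \coloneqq u + \sigma_n^{-1} w_{k_n}$ one then checks that $v_n \to u$ in $L^2(\Omega)$, that $\sigma_n v_n|_{\partial\Omega} = \phi_{k_n} \to \phi$ in $L^2(\partial\Omega)$ (using $u|_{\partial\Omega} = 0$), and that $\|\nabla v_n\|_{L^2}^2 \to \|\nabla u\|_{L^2}^2$, as required. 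The main obstacle lies exactly here: a generic $\phi \in L^2(\partial\Omega)$ need not be the trace of any $H^1$-function, so the naive ansatz $v_n = u + \sigma_n^{-1} w$ with a single lifting $w$ of $\phi$ is unavailable, and one has to couple the regularisation of $\phi$ to the rate at which $\sigma_n \to \infty$. Once both Mosco conditions are in hand, Theorem~\ref{thm:strongconv} delivers the claim.
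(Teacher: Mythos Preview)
Your proposal is correct and follows essentially the same route as the paper: the same limit form $(a_\infty,j_\infty)$ on $H^1_0(\Omega)\times L^2(\partial\Omega)$, the same verification of condition~(i) via weak $H^1$-compactness and compactness of the trace, and the same recovery sequence $u+\sigma_n^{-1}w_n$ for condition~(ii). The only difference is that you spell out the diagonal argument producing the $w_{k_n}$, whereas the paper simply asserts the existence of $v_n\in H^1(\Omega)$ with $v_n|_{\partial\Omega}\to g$ in $L^2(\partial\Omega)$ and $\sigma_n^{-1}v_n\to 0$ in $H^1(\Omega)$.
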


\begin{proof}
		The operator $-\Delta_D \oplus 0$ is associated with the $j_D$-elliptic form $a_D$ given by
		$a_D( (u,g), (v,h) ) := \int_\Omega \nabla u  \overline{\nabla v}$,
		where $j_D\colon H^1_0(\Omega) \times L^2(\partial\Omega) \to H$ is given by $j_D( (u,g) ) := (u, g)$.

		Let $\sigma_n \to \infty$ and
		let $(u_n)$ be a sequence in $V$ such that
		\[
			j_n(u_n) \coloneqq (u_n, \sigma_n u_n|_{\partial\Omega}) \rightharpoonup (u,g)
		\]
		in $H$ and $s \coloneqq \liminf \int_\Omega |\nabla u_n|^2 < \infty$.
		Passing to a subsequence we can assume that $\int_\Omega |\nabla u_n|^2 \to s$. Then $(u_n)$ is bounded
		in $H^1(\Omega)$, and passing to further subsequence we can assume that $u_n \rightharpoonup u$ in $H^1(\Omega)$.
		Then in particular $u_n|_{\partial\Omega} \to u|_{\partial\Omega}$ and $\int_\Omega |\nabla u|^2 \le s$.
		Moreover,
		\[
			u_n|_{\partial\Omega} = \frac{\sigma_n u_n|_{\partial\Omega}}{\sigma_n} \to 0
		\]
		since $(\sigma_n u_n|_{\partial\Omega})$ is bounded and $\sigma_n \to \infty$, hence $u|_{\partial\Omega} = 0$.
		Thus $(u,g) \in H^1_0(\Omega) \times L^2(\partial\Omega)$, $j_D( (u,g) ) = (u,g)$ and
		$\liminf \int_\Omega |\nabla u_n|^2 \ge \int_\Omega |\nabla u|^2$.
		We have checked the first part of the characterisation in Theorem~\ref{thm:strongconv}.

		For the second part, let $(u,g) \in H^1_0(\Omega) \times L^2(\partial\Omega)$ be fixed.
		Since $\sigma_n \to \infty$, there exist $v_n \in H^1(\Omega)$ satisfying $v_n|_{\partial\Omega} \to g$ in $L^2(\partial\Omega)$
		and $\frac{v_n}{\sigma_n} \to 0$ in $H^1(\Omega)$. Define $u_n \coloneqq u + \frac{v_n}{\sigma_n}$. Then
		$(u_n, u_n|_{\partial\Omega}) \in V$ and
		\[
			j_n( (u_n, u_n|_{\partial\Omega}) ) = (u_n, v_n|_{\partial\Omega}) \to (u, g) = f_D( (u,g) )
		\]
		in $H$. Moreover, $\int_\Omega |\nabla u_n|^2 \to \int_\Omega |\nabla u|^2$ since $u_n \to u$ in $H^1(\Omega)$.
\end{proof}

As already emphasised, one advantage of our Mosco-type result is that it \emph{characterises} convergence, meaning that it paves the road to \emph{non-convergence} results as well.
Given that the eigenvalue problem associated with the operator $A_{\rho,\sigma}$ is
\[
\left\{ \begin{aligned}
		\lambda\rho u&=\Delta u\qquad &\hbox{in }\Omega,\\
		\lambda \sigma u_{|\partial\Omega} & =-\frac{\partial u}{\partial \nu}\qquad  &\hbox{on }\partial \Omega,
	\end{aligned} \right.
\]
while the eigenvalue problem associated with the Dirichlet-to-Neumann operator is
\[
\left\{ \begin{aligned}
		0&=\Delta u\qquad &\hbox{in }\Omega,\\
		\lambda u_{|\partial\Omega} & =-\frac{\partial u}{\partial \nu}\qquad  &\hbox{on }\partial \Omega,
	\end{aligned} \right.
\]
the following may look surprising.

\begin{prop}
The following assertions hold in the space $L^2(\Omega)\times L^2(\partial \Omega)$.
	\begin{enumerate}[(1)]
	\item	
		$A_{1,\sigma}$ does not converge to any closed operator in the weak resolvent sense as $\sigma \to 0$.
	\item
		$A_{\rho,1}$ does not converge to any closed operator in the weak resolvent sense as $\rho \to 0$.
	\end{enumerate}
\end{prop}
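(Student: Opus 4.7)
The plan is to identify, for both families, the limit of the resolvents $(I + A)^{-1}$ explicitly, and to show that the resulting limit operator has a nontrivial kernel and hence cannot be the resolvent of any closed operator. This suffices because weak resolvent convergence to a closed operator $T$ at a common regular value would force the limit of the resolvents to coincide with the (necessarily injective) resolvent of $T$. In particular, a strong limit which fails to be injective already rules out any weak resolvent limit.

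For part (1), I write $(I + A_{1,\sigma})^{-1}(f_1, f_2) = j_{1,\sigma}(w_\sigma)$ with $w_\sigma = (u_\sigma, u_\sigma|_{\partial\Omega}) \in V$. Translating through the definition of the associated operator, $u_\sigma \in H^1(\Omega)$ is to satisfy the variational identity
\[
    \int_\Omega \nabla u_\sigma \cdot \overline{\nabla v} + \int_\Omega u_\sigma \overline{v} + \sigma^2 \int_{\partial\Omega} u_\sigma \overline{v} = \int_\Omega f_1 \overline{v} + \sigma \int_{\partial\Omega} f_2 \overline{v}
\]
for every $v \in H^1(\Omega)$. Testing against $v = u_\sigma$ and applying Young's inequality yields a bound on $\|u_\sigma\|_{H^1(\Omega)}$ that is uniform in $\sigma \in (0,1]$. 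By weak compactness together with the compactness of the embeddings $H^1(\Omega) \hookrightarrow L^2(\Omega)$ and $H^1(\Omega) \to L^2(\partial\Omega)$, any sequence $\sigma_n \to 0$ admits a subsequence along which $u_{\sigma_n} \to u^{\ast}$ in $L^2(\Omega)$ and $u_{\sigma_n}|_{\partial\Omega} \to u^{\ast}|_{\partial\Omega}$ in $L^2(\partial\Omega)$; passing to the limit in the variational identity identifies $u^{\ast}$ as the unique weak solution of $u^{\ast} - \Delta u^{\ast} = f_1$ with homogeneous Neumann boundary condition, so in fact the whole family converges and
\[
    (I + A_{1,\sigma})^{-1}(f_1, f_2) = (u_\sigma, \sigma u_\sigma|_{\partial\Omega}) \longrightarrow (u^{\ast}, 0) \qquad \text{in } H.
\]
Crucially, this limit depends only on $f_1$, so the limit operator annihilates the nonzero subspace $\{0\} \times L^2(\partial\Omega)$ and cannot be the resolvent of any closed operator, which proves (1).

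Part (2) is handled by the analogous argument with $\rho \to 0$ and $\sigma = 1$. The variational identity now reads
\[
    \int_\Omega \nabla u_\rho \cdot \overline{\nabla v} + \rho^2 \int_\Omega u_\rho \overline{v} + \int_{\partial\Omega} u_\rho \overline{v} = \rho \int_\Omega f_1 \overline{v} + \int_{\partial\Omega} f_2 \overline{v},
\]
and testing against $v = u_\rho$ directly controls only $\|\nabla u_\rho\|_{L^2(\Omega)}$ and $\|u_\rho|_{\partial\Omega}\|_{L^2(\partial\Omega)}$; to secure a uniform $H^1$-bound I will appeal to the trace-Poincar\'e inequality $\|u\|_{L^2(\Omega)} \le C \bigl( \|\nabla u\|_{L^2(\Omega)} + \|u|_{\partial\Omega}\|_{L^2(\partial\Omega)} \bigr)$, valid on every bounded Lipschitz domain. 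Passing to the limit identifies $u^{\ast}$ as the unique solution of $-\Delta u^{\ast} = 0$ with Robin boundary condition $u^{\ast} + \frac{\partial u^{\ast}}{\partial \nu} = f_2$, yielding $(I + A_{\rho,1})^{-1}(f_1, f_2) \to (0, u^{\ast}|_{\partial\Omega})$, which is independent of $f_1$ and thus has $L^2(\Omega) \times \{0\}$ in its kernel. The main technical step in both parts is the uniform $H^1$-estimate---routine in part (1), but requiring the trace-Poincar\'e inequality in part (2) since the mass term $\rho^2 \int_\Omega |u_\rho|^2$ degenerates---after which the Mosco-style passage to the limit is standard.
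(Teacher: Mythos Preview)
Your argument is correct and takes a genuinely different route from the paper's. The paper argues by contradiction via its Mosco characterisation (Theorem~\ref{thm:strongconv}): assuming a self-adjoint limit $B$ exists, weak resolvent convergence upgrades to strong resolvent convergence by~\cite[\S VIII.7]{ReeSim80}, hence to Mosco convergence of the forms; the authors then observe that condition~(b.ii) can be satisfied only for pairs $(u,g)$ in $L^2(\Omega)\times\{0\}$ (respectively $\{0\}\times L^2(\partial\Omega)$), which is not dense, so no limiting form can exist. Your approach instead computes the strong limit of $(I+A)^{-1}$ explicitly and shows it annihilates the complementary subspace, which cannot happen for a genuine resolvent.

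The two arguments share the same geometric core --- the degenerating factor forces the boundary (respectively interior) component to vanish in the limit --- and both invoke the trace--Poincar\'e inequality in part~(2) for exactly the same reason. Your version is more elementary and self-contained: it does not appeal to the Mosco machinery and, as a bonus, identifies the limiting objects concretely (the Neumann resolvent on $L^2(\Omega)$ in~(1), and a Robin-type Dirichlet-to-Neumann resolvent on $L^2(\partial\Omega)$ in~(2)). The paper's version is shorter once Theorem~\ref{thm:strongconv} is available and illustrates the utility of the characterisation for proving \emph{non}-convergence. One minor point worth tightening in your write-up: you compute the limit at the single value $\lambda=1$, whereas ``weak resolvent convergence to a closed operator'' a priori refers to some regular value; either note that the identical computation works for every $\lambda>0$, or invoke the resolvent identity to transfer the non-injectivity.
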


\begin{proof}
In both cases, we follow the same strategy. Assume that the family of operators converges
to a densely defined (necessarily self-adjoint) operator $B$ on $H:=L^2(\Omega)\times L^2(\partial \Omega)$ in the weak resolvent sense.
Then the operators converge even in the strong resolvent sense~\cite[\S VIII.7]{ReeSim80},
and hence the quadratic forms converge in the sense of Mosco by Theorem~\ref{thm:strongconv}.
But for both situations we will show that the set of $u$ such that the
second condition of part (b) of Theorem~\ref{thm:strongconv} can be satisfied is non-dense in $H$.
Hence there cannot be a limiting quadratic form, thus proving the claim.

(1)
		Take a null sequence $(\sigma_n)_{n\in\mathbb N}$. Let $(u_n)$ be a sequence in $H^1(\Omega)$
		such that $j_n( (u_n, u_n|_{\partial\Omega}) ) = (u_n, \sigma_n u_n|_{\partial\Omega})$
		converges to $(u,g)$ in $H$. Assume moreover that $a( (u_n,u_n|_{\partial\Omega}), (u_n, u_n|_{\partial\Omega}) )$
		is bounded. Then $(u_n)_{n\in\mathbb N}$ is bounded in $H^1(\Omega)$.
		Hence $(u_n|_{\partial\Omega})_{n\in\mathbb N}$ is bounded in $L^2(\partial\Omega)$, implying that $\sigma_n u_n|_{\partial\Omega} \to 0$,
		i.e., $g = 0$. Hence the set of possible limits in (b.ii) of Theorem~\ref{thm:strongconv} is
		contained in the non-dense set $L^2(\Omega) \times \{0\}$.

(2)
		Let $(\rho_n)_{n\in\mathbb N}$ be a null sequence. Let $(u_n)$ be a sequence in $H^1(\Omega)$
		such that $j_n( (u_n, u_n|_{\partial\Omega}) ) = (\rho_n u_n, u_n|_{\partial\Omega})$
		converges to $(u,g)$ in $H$. Assume moreover that $a( (u_n,u_n|_{\partial\Omega}), (u_n, u_n|_{\partial\Omega}) )$
		is bounded. Then $(u_n)_{n\in\mathbb N}$ is bounded in $H^1(\Omega)$ since
		for some $c > 0$ we have
		\[
			\|u\|_{H^1(\Omega)}^2 \le c \|\nabla u\|_{L^2(\Omega; \mathbb{R}^N)}^2 + c \|u|_{\partial\Omega}\|_{L^2(\partial\Omega)}^2
		\]
		by~\cite[\S 1.1.15]{Maz85}.
		Hence $(u_n)_{n\in\mathbb N}$ is bounded in $L^2(\Omega)$, implying that $\rho_n u_n \to 0$,
		i.e., $u = 0$. Hence the set of possible limits in (b.ii) of Theorem~\ref{thm:strongconv} is
		contained in the non-dense set $\{0\} \times L^2(\partial\Omega)$.
	\end{proof}

\bibliographystyle{abbrvnat}
\bibliography{../referenzen/literatur}
\end{document}